\documentclass[11pt]{amsart}

\usepackage[all]{xy}
\usepackage{fullpage, amsfonts, amsmath, amsthm, epsfig, paralist, psfrag, subfig, setspace}
\usepackage{amssymb, url, enumerate, mathrsfs}
\usepackage{amscd}
\usepackage{url}

\usepackage{hyperref}
\usepackage{mathtools}

\usepackage{tikz}
\usepackage{tikz-cd}
\usetikzlibrary{positioning}





\def\id{\operatorname{id}}

\def\Hom{\operatorname{Hom}}

\def\C{\mathbf k}

\def\uC{\mathcal{C}}
\def\uV{\mathcal{V}}

\def\uM{\mathcal{M}}
\def\uG{\mathcal{G}}
\def\uH{\mathcal{H}}
\def\uR{\mathcal{R}}

\def\uN{\mathcal{N}}

\def\vctk{\mathrm{Vect}_{\C}}

\def\ob{\mathrm{ob}}
\def\OneHom{\mathrm{1\mbox{-}Hom}}
\def\TwoHom{\mbox{2-Hom}}

\def\vgw{\mathrm{Vec}_G^\alpha}

\def\ind{\mathrm{ind}}

\usepackage{fouridx}

\def\Tr{\mathbb{T}\mathrm{r}}


\def\twotrunc{\Pi_1}

\newcommand{\longmap}[3]{#1\negmedspace : #2 \longrightarrow #3}
\newcommand{\inv}{^{-1}}
\newcommand{\ZZ}{\mathbb Z}

\newtheorem{theorem}{Theorem}[section]
\newtheorem*{theorem*}{Theorem}
\newtheorem*{maintheorem*}{Main Theorem}
\newtheorem{proposition}[theorem]{Proposition}
\newtheorem{corollary}[theorem]{Corollary}
\newtheorem*{pcorollary*}{Corollary of proof}
\newtheorem{lemma}[theorem]{Lemma}

\theoremstyle{definition}
\newtheorem{definition}[theorem]{Definition}
\newtheorem{example}[theorem]{Example}
\newtheorem{remark}[theorem]{Remark}

\usepackage{titletoc}

\usepackage{graphicx}
\graphicspath{ {./images/} }
\DeclareGraphicsExtensions{.pdf,.png,.jpg}

\begin{document}
\title{Representation and character theory of finite categorical
  groups}
\author{Nora Ganter}
\author{Robert Usher}
\address{Department of Mathematics and Statistics\\
The University of Melbourne\\ Parkville VIC 3010\\ Australia}
\thanks{This paper is based on Usher's Masters Thesis, completed in
  2013 at the University of 
  Melbourne. Ganter was supported by an Australian Research Fellowship
  and by ARC grant DP1095815.}
\begin{abstract}
  We study the gerbal representations of a finite
  group $G$ or, equivalently, 
  module categories over Ostrik's category $\mathrm{Vec}_G^\alpha$ for
  a 3-cocycle $\alpha$. We adapt
  Bartlett's string diagram formalism to this situation to prove
  that the categorical character of a gerbal representation is 
  a representation of the inertia groupoid of a categorical group.
  We interpret such a representation as a module over the twisted Drinfeld
  double $D^\alpha(G)$.
\end{abstract}

\maketitle
\tableofcontents

\section{Introduction}
Let $\C$ be a field.
In classical representation theory, there are several equivalent 
definitions of the notion of a projective representation of a finite
group $G$ on a $\C$-vector space $V$:
\begin{enumerate}[(i)]
	\setlength{\itemsep}{9pt}
\item a group homomorphism $\varrho : G \longrightarrow \mathrm{PGL}(V)$,
\item a map $\varrho : G \longrightarrow \mathrm{GL}(V)$ with
  2-cocycle $\theta : G \times G \longrightarrow \C^{\times}$ such
  that 
\[
\varrho(g)\cdot \varrho(h) = \theta(g,h)\cdot \varrho(gh)
\]
\item a group homomorphism $\varrho : \widetilde{G} \longrightarrow
  \mathrm{GL}(V)$, for $\widetilde{G}$ a central extension of $G$ by
  $\C^{\times}$, 
\item a module over the twisted group algebra $\C^\theta[G]$ for some
  2-cocycle ${\theta : G \times G \longrightarrow \C^{\times}}$. 
\end{enumerate}
In this work we consider the situation where $V$ is replaced by a
$\C$-linear category $\mathcal V$ or, more generally, by an object of
a $\C$-linear strict 2-category.
In \cite{Frenkel:Gerbal_representations}, Frenkel and Zhu categorified
points (i) to (iii) as follows\footnote{We have slightly modified the
  context of their definitions to suit our purposes, demanding
  $\C$-linearity, while allowing ourselves to work in the
  2-categorical setup.}
\begin{enumerate}[(i)]
	\setlength{\itemsep}{9pt}
\item a homomorphism of groups $G \longrightarrow
  \pi_0(\mathrm{GL}(\mathcal V))$, see \cite[Definition 2.8]{Frenkel:Gerbal_representations},
\item a projective 2-representation of $G$ on $\uV$ for some 3-cocycle $\alpha
  : G \times G \times G \longrightarrow \C^{\times}$, see  \cite[Remark
  2.9]{Frenkel:Gerbal_representations}, 
\item a homomorphism of categorical groups $\uG \longrightarrow
  \mbox{GL}(\mathcal V)$ where $\uG$ is a 2-group extension of $G$
  by $[\mathrm{pt}/\C^{\times}]$, see \cite[Definition 2.6]{Frenkel:Gerbal_representations}.
\end{enumerate}
They prove that these three notions specify the same data and coin the term
{\em gerbal representation} of $G$ to describe any of these categorifications (we will
also use the term {\em projective 2-representation}).  To be more precise,
the objects described in (ii) and (iii) are in an obvious manner organised into
bicategories, and the argument in \cite[Theorem 2.10]{Frenkel:Gerbal_representations} sketches
an equivalence of bicategories.  The objects in (i) classify the objects of either of these
bicategories up to equivalence.  We review the work of Frenkel and Zhu in Section
\ref{sec:projective_2-representations}. A special case of \cite{Ostrik:Module_categories_Drinfeld} yields a
categorification of the last point:
\begin{enumerate}[(i)]
\setcounter{enumi}{3}  
\item a module category over the categorified twisted group algebra
  $\operatorname{Vect_\C^\alpha[G]}$ or, in Ostrik's notation, $\vgw$.
\end{enumerate}
There is an equivalence of bicategories between this formulation and those of (i)--(iii),
see Section \ref{sec:2-representations_as_module_categories}.  

An important class of examples of projective 2-representations are braid group actions,  which
can be read about in a paper of Khovanov and Thomas \cite{KT:BraidTC} building on work of Deligne \cite{Deligne:AdG}.
Projective 2-representations also also expected to play a role in TQFT applications; in \cite{FHLT:TQFT}
the authors argue that the categorified twisted group algebra determines a 3-dimensional extended TQFT
whose value at the point is $\operatorname{Vect_\C^\alpha[G]}$.

The goal of the present work is to describe the characters of projective 2-representations.  The special case
where $\alpha = 0$ was treated in \cite{GK:Representation_and_character_theory} and  
\cite{Bartlett:Unitary_2_representations}, where the character is defined using the \emph{categorical trace}
\[
X_\varrho(g) \,=\, \Tr(\varrho(g))
\,=\, \TwoHom(1_V, \varrho(g)).
\]
The \emph{categorical character} of $\varrho$ then consists of the $X_\varrho(g)$ together with a family of isomorphisms
\[
\beta_{g,h} : X(g) \longrightarrow X(hgh^{-1})
\]
(compare Definition \ref{def:character}).
We generalise these definitions to the projective case and arrive at the following theorem.

\begin{maintheorem*}
  Let $\uG$ be a finite categorical group, let $V$ be an object of a
 $\C$-linear strict 2-category and let $$\longmap\varrho{\uG}{\mathrm{GL}(V)}$$
  be a linear representation of $\uG$ on $V$. Then the categorical
  character of $\varrho$ is a representation of the inertia groupoid
 $\twotrunc \Lambda\uG$ of $\uG$.
(see Theorem \ref{thm:char;rep;twdd})
\end{maintheorem*}

Using the work of Willerton \cite{Willerton:Twisted_Drinfeld_double}, we further show Corollary \ref{cor:tigtwdd}:
The category of representations of $\twotrunc \Lambda\uG$ is equivalent to that of modules over the twisted
Drinfeld double $D^\omega(G)$.

\subsection{Acknowledgements} We would like to thank 
Simon Willerton for helpful conversations. Many thanks
go to Matthew Ando, who greatly helped to sort through some of the proofs.  We would also
like to thank the anonymous referee for many helpful suggestions.
\section{Background}
\subsection{Categorical groups}
By a {\em categorical group} or {\em 2-group} we mean a monoidal groupoid $(\uG, \bullet,
\textbf{1})$ where each object is weakly invertible.  For a detailed
introduction to the subject, we 
refer the reader to \cite{Baez:2-groups}, where the term {\em weak 2-group} is used.

\begin{example}[Symmetry 2-groups]\label{exa:symmetry_2-groups}
Let $\uV$ be a category.  Then the autoequivalences of $\uV$ and the
natural isomorphisms between them form a categorical group. 
More generally, let $V$ be an object in a bicategory.  Then the weakly
invertible 1-morphisms of $V$ and the 2-isomorphisms 
between them form the categorical group $\mathrm{1Aut(V)}$. If $V$ is a
$\C$-linear category, we may restrict ourselves to linear functors and natural transformations
and write $\mathrm{GL}(V)$.  We will also use this notation in general $\C$-linear 2-categories.
\end{example}

\begin{example}[Skeletal categorical groups]
Let $\uG$ be a skeletal 2-group, i.e., assume that each isomorphism
class in $\uG$ contains exactly one object.
Then the objects of $\uG$ form a group $G := \ob(\uG)$, and the
automorphisms of $\textbf{1}$ form an abelian group $A :=
\mathrm{aut}_{\uG}(\textbf{1})$.  The  
group $G$ acts on $A$ by conjugation
\[
a \longmapsto g \bullet a \bullet g^{-1}
\]
(unambiguous, because $\uG$ is skeletal).  We will denote this action by
\[
a^{g} := g \bullet a \bullet g^{-1}.
\]
We make the assumption that $\uG$ is {\em special}, i.e. that the
unit isomorphisms are identities, i.e., 
\[
\textbf{1} \bullet g = g = g \bullet \textbf{1}.
\]
Then $\uG$ is completely determined by the data above together with the 3-cocycle
\[
\alpha : G \times G \times G \longrightarrow A, 
\]
encoding the associators
\[
\alpha(g,h,k) \bullet ghk \in \mathrm{aut}_{\uG}(ghk).
\]
\label{ex:ske2grp}
\end{example}
Every finite categorical group is equivalent to one of this form, and
there is the following result of Sinh.
\begin{theorem}[{see \cite{Sinh:Gr_categories} and \cite[\S 8.3]{Baez:2-groups}}]
Let $\uG$ be a finite categorical group.  Then $\uG$ is determined up
to equivalence by the data of 
\begin{enumerate}[(i)]
\item a group $G$,
\item a $G$-module $A$, and
\item an element $[\alpha]$ of the group cohomology $H^3(G,A)$.
\end{enumerate}
\label{thm:sinh}
\end{theorem}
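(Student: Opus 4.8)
The plan is to put $\uG$ into the normal form of Example~\ref{ex:ske2grp} and then recognise two such normal forms as equivalent exactly when their cocycles are cohomologous. First I would strictify: choosing one object in each isomorphism class of $\uG$, and for every object an isomorphism to its chosen representative, one transports the monoidal product, the associator and the unit constraints along these isomorphisms to obtain an equivalent \emph{skeletal} monoidal groupoid. On the set $G$ of isomorphism classes this induces a product which, by skeletality, is associative on the nose, and which has inverses because every object of $\uG$ is weakly invertible; hence $G=\ob(\uG)$ is a group. A further standard adjustment makes the unit constraints into identities, i.e.\ makes $\uG$ \emph{special}, and skeletality survives this because the unitors are automorphisms of objects that are already equal. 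This is the assertion ``every finite categorical group is equivalent to one of this form'' preceding the theorem, and I would either cite it or include this paragraph as its proof.

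Next I would read off the invariants from a special skeletal $\uG$. The monoid $A:=\mathrm{aut}_{\uG}(\textbf 1)$ carries the two operations ``compose'' and ``$\bullet$'', which agree and are commutative by the Eckmann--Hilton argument, so $A$ is an abelian group; left translation by $g$ identifies $A$ with $\mathrm{aut}_{\uG}(g)$, and $a\mapsto a^{g}=g\bullet a\bullet g^{-1}$, unambiguous because $\uG$ is skeletal, makes $A$ a $G$-module. The associators are automorphisms $\alpha(g,h,k)\bullet ghk$ of the object $ghk$; pulling them back to $A$ gives a function $\alpha\colon G\times G\times G\to A$, and the pentagon axiom becomes, term by term, the normalised $3$-cocycle identity
\[
\bigl(g\cdot\alpha(h,k,l)\bigr)\,\cdot\,\alpha(g,hk,l)\,\cdot\,\alpha(g,h,k)\;=\;\alpha(gh,k,l)\,\cdot\,\alpha(g,h,kl),
\]
while the triangle axiom together with specialness gives the normalisation $\alpha(g,h,k)=1$ whenever one of $g,h,k$ equals $\textbf 1$. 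Thus $\uG$ determines a triple $(G,A,[\alpha])$, and conversely the recipe of Example~\ref{ex:ske2grp} builds a special skeletal $2$-group out of any such triple.

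It then remains to show that this triple is a complete invariant, and this step is the heart of the matter. A monoidal equivalence $F\colon\uG\to\uG'$ between special skeletal $2$-groups is a bijection on objects, since it is fully faithful and essentially surjective between skeletal categories; hence it restricts to a group isomorphism $G\cong G'$, and its effect on automorphism groups gives an isomorphism $A\cong A'$, which one checks intertwines the conjugation actions. The structure constraint of $F$ is an isomorphism $F(g)\bullet F(h)\to F(gh)$ of equal objects of $\uG'$, hence an element $\mu(g,h)\in A'$, and unwinding the hexagon coherence axiom of a monoidal functor shows precisely that $\alpha'$ and the image of $\alpha$ differ by the coboundary of $\mu$; a monoidal natural isomorphism between two such functors alters $\mu$ by a $1$-cochain and so leaves its coboundary unchanged. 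Hence equivalent $2$-groups carry the same $(G,A,[\alpha])$, and feeding a coboundary difference back through Example~\ref{ex:ske2grp} shows that cohomologous cocycles give equivalent $2$-groups. The main obstacle is exactly this last step: one must translate the coherence axioms for a monoidal functor and for a monoidal natural transformation, written in the special skeletal normal form, into the statements ``$\alpha'$ and $\alpha$ differ by a coboundary'' and ``this coboundary is unchanged under modification by a $1$-cochain'', and it is this bookkeeping that forces the remaining invariant to live in $H^3(G,A)$ rather than among all $3$-cocycles. By contrast the skeletalisation, the Eckmann--Hilton step, and the identification of the pentagon with the cocycle condition are routine.
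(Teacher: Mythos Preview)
The paper does not give a proof of this theorem; it is stated with attribution to \cite{sinh} and \cite[\S 8.3]{baez;2grp} and used as background. So there is no in-paper argument to compare against. Your outline is the standard one from those references: skeletalise, pass to a special skeletal form, read off $(G,A,\alpha)$ via Eckmann--Hilton and the pentagon, and then identify the hexagon for a monoidal equivalence with the coboundary relation. That is correct, and it is exactly the argument the paper is invoking by citation.

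Two small remarks. First, the sentence about monoidal natural isomorphisms changing $\mu$ by a $1$-cochain is not needed for the direction ``equivalent $\Rightarrow$ same $[\alpha]$''; a single monoidal equivalence already exhibits $\alpha'/\alpha$ as $d\mu$. That sentence is relevant only if you want the stronger statement that the bicategory of special skeletal $2$-groups is equivalent to a $2$-category built from cocycles, cochains, and $0$-cochains (which is what \cite[\S 8.3]{baez;2grp} actually proves). Second, the passage from ``skeletal'' to ``special'' deserves one line of justification: in a skeletal monoidal groupoid the unitors are automorphisms, and one rescales the tensor product of morphisms by these unitors to force $\lambda$ and $\rho$ to be identities while preserving skeletality and the pentagon. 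With those two clarifications your write-up would serve as a self-contained proof in place of the citation.
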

Without loss of generality, we may assume the cocyle $\alpha$ to be
normalised. 
We will be particularly interested in the case where $A = \C^\times$.
Cocycles of this form are key to
our understanding of a variety of different topics, ranging from
Chern-Simons theory (\cite{Dijkgraff:Topological_gauge_theories}, \cite{Freed:Chern-Simons_theory}) to
generalised and Mathieu moonshine (\cite{Ganter:Hecke_operators}, \cite{Gaberdiel:2012gf})
to line bundles on Moduli spaces and twisted 
sectors of vertex operator algebras.   
In the physics literature, evidence of such cocycles typically turns
up in the form of so called {\em phase factors}.

\begin{definition}
  Let $\uG$ and $\uH$ be categorical groups.
  By a {\em homomorphism} from $\uG$ to $\uH$ we mean a (strong)
  monoidal functor
  \[
    \uG \longrightarrow \uH.
  \]
  A {\em linear representation} of a categorical group $\uG$ with
  centre $\mathrm{End}_{\uG}(\textbf{1})=\C^\times$ is a homomorphism
  \[
    \varrho : \uG \longrightarrow \mathrm{GL}(V)
  \]
  where
  $V$ is an object of a strict ($\C$-linear) 2-category, and $\C^\times$ is
  required to act by multiplication with scalars.  
\label{def:2rep} 
\end{definition}
We will study such linear representations for skeletal $\uG$. Note
that the condition on the action of $\C^\times$ implies that the
action of $G=\ob(\uG)$ on $\C^\times$ is trivial, restricting us to
those skeletal 2-groups that are classified by  
$
 [\alpha]\in H^3(G;\C^\times)
$
where $G$ acts trivially on $\C^\times$.
\subsection{String diagrams for strict 2-categories}
We recall the string diagram formalism from \cite[\S 1.1]{Caldararu:The_Mukai_pairing} and
\cite[Chapter 4]{Bartlett:Unitary_2_representations} (our diagrams are upside down  
in comparison to those in \cite{Bartlett:Unitary_2_representations}).   
This already turns up in \cite{Penrose:AoNDT} and in the work of Joyal and Street \cite{AS:GTCI},
with a reference to \cite{KL:Coherence}.

Let $\uC$ be a strict 2-category, i.e. a category enriched over the
category of small categories.  Let $x, y$ be objects in $\uC$, and let
$A$ be a 1-morphism from $x$ to $y$.  In string diagram notation, $A$
is drawn 
\begin{center}
\includegraphics{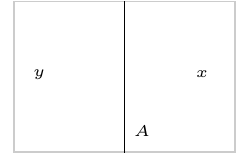}
\end{center}
Given $A, B \in \OneHom_{\uC}(x,y)$, let $\phi : A \Rightarrow B$ be a
2-morphism.  In string diagram notation, $\phi$ is drawn 
\begin{center}
\includegraphics{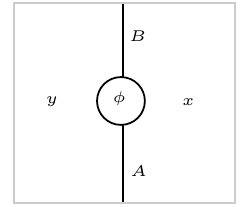}
\end{center}
So, our string diagrams are read from right to left and from bottom to top.  
Horizontal and vertical composition are represented by the respective concatenations of string diagrams.  For example, if $A \in \OneHom_{\uC}(x,y)$, and $\Phi : B \Rightarrow B'$ is a 2-morphism
between $B, B' \in \OneHom_{\uC}(y,z)$, then the horizontal composition of $A$ with $\Phi$ is represented by either of the diagrams
\begin{center}
\includegraphics{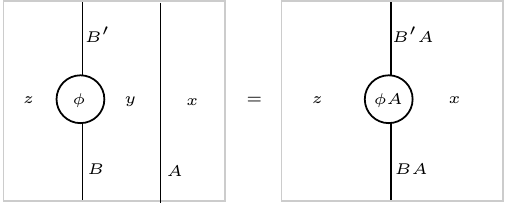}
\end{center}
The equals sign in this figure indicates that both string diagrams refer to the same 2-morphism.  
Given $C, D \in \OneHom_{\uC}(y,z)$, let $\psi : C \Rightarrow D$ be a 2-morphism, then the horizontal composition of $\phi$ with $\psi$ is represented
by
\begin{center}
\includegraphics{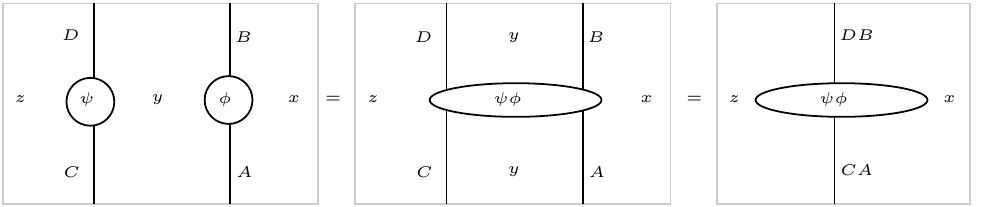}
\end{center}{\ \\}
If $\phi : A \Rightarrow B$ and $\phi' : B \Rightarrow C$ are composable 2-morphisms, their vertical composition is represented by
\begin{center}
\includegraphics{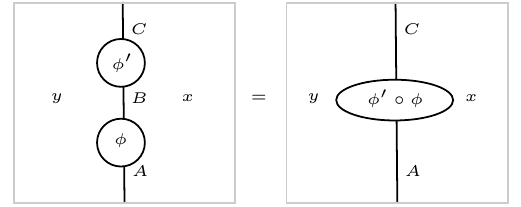}
\end{center}
We will often work with $\C$-linear 2-categories.  There each $\TwoHom_{\uC}(A,B)$ is a $\C$-vector space and vertical composition is $\C$-bilinear.  If $\phi, \phi \in \TwoHom_{\uC}(A,B)$ are 2-morphisms related
by a scalar $s \in \C$ (i.e. $s \phi = \phi'$), then we draw
\begin{center}
\includegraphics{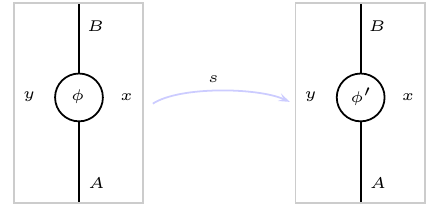}
\end{center}
We will occasionally omit borders and labels of diagrams where the context is clear.

\begin{remark}
Given a categorical group $\uG$, one could use a strictification result, as suggested in \cite{Bartlett:Unitary_2_representations},
to make use of string diagram notation.  Rather than doing this, we note that string diagrams for skeletal categorical groups are also unambiguous. The string diagrams appearing in the next section are similar to the ones above, but differ
in that we now need to keep track of associators.
\end{remark}

\section{Categorified conjugacy classes}\label{sec:inertia}
The goal of this section is to describe the inertia groupoid of a skeletal categorical group
and interpret modules over $D^\alpha(G)$ as
representations of that inertia groupoid.
\subsection{Homomorphisms of skeletal categorical groups}
A categorical group $\uG$ may be viewed as one-object bicategory. We
will denote this bicategory $\bullet /\!\!/ \uG$, i.e., the object is
$\bullet$ and
$\OneHom(\bullet,\bullet) =\uG$.
In this section we study the bicategory of bifunctors
$$
  \mathrm{Bicat}(\bullet/\!\!/ \uH,\bullet/\!\!/\uG),
$$
where $\uH$ and $\uG$ are skeletal categorical groups, classified by
cocycles 
\begin{eqnarray*}
  \alpha\negmedspace : G\times G\times G & \longrightarrow & A,
\end{eqnarray*}
and 
\begin{eqnarray*}
  \beta\negmedspace : H\times H\times H & \longrightarrow & B,
\end{eqnarray*} 
as in Example \ref{ex:ske2grp}. 
One may think of this as the bicategory of representations of $\uH$ in
$\uG$, but this time the target is not a strict 2-category. 

\subsubsection{The Objects}
Objects of $\mathrm{Bicat}(\bullet/\!\!/ \uH,\bullet/\!\!/\uG)$ are
homomorphisms of categorical groups, a.k.a. strong
monoidal functors, from $\uH$ to $\uG$. Such a homomorphism
is determined by the following data:
a group homomorphism $\longmap\varrho HG$, an $H$-equivariant homomorphism
$\longmap fBA$, and a 2-cochain $\gamma : H \times H \longrightarrow A$.  For $h_1, h_2 \in H$,
we draw $\gamma(h_1,h_2) \bullet \varrho(h_1 h_2)$ as

\begin{center}
  \includegraphics{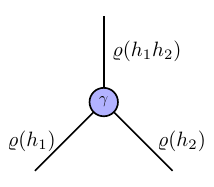}
\end{center}
If we let $H$ act on $A$ via $\varrho$ 
then $\gamma$ has to satisfy
\begin{eqnarray*}
  d\gamma & = & \frac{\varrho^*\alpha}{f_*\beta},
\end{eqnarray*}
i.e., 
\begin{eqnarray}
\label{eq:dgamma}
  \frac{\gamma(h_1h_2,h_3)\cdot\gamma(h_1,h_2)}{\gamma(h_1,h_2h_3)\cdot\gamma(h_2,h_3)^{\varrho(h_1)}}
  &\,\, = \,\,& \frac{\alpha(\varrho(h_1),\varrho(h_2),\varrho(h_3))}{f(\beta(h_1,h_2,h_3))}.
\end{eqnarray}
for all $h_1, h_2, h_3 \in H$. The hexagon equation \eqref{eq:dgamma} is drawn in string diagram notation as
\smallskip

\begin{center}
  \includegraphics{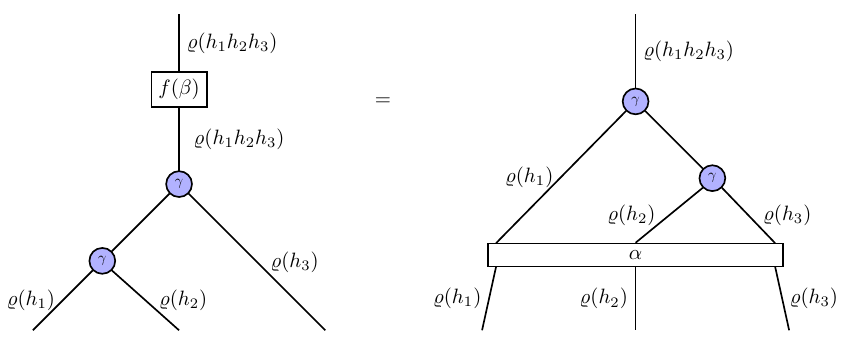}\\ 

\medskip
\end{center}

A priori, one expects one more piece of data, namely an arrow
\begin{eqnarray*}
  a\negmedspace : \varrho(1) & \longrightarrow & 1,
\end{eqnarray*}
i.e, an element $a\in A$, satisfying\footnote{Note that the axioms in
  \cite{Leinster:Basic_bicategories} are formulated in terms of $a\inv$.} 
\begin{eqnarray*}
  \gamma(1,h) & = & a\\
  \gamma(h,1) & = & a^{\varrho(h)}
\end{eqnarray*}
for all $h\in H$.
Since $\alpha$ and $\beta$ are normalised, 
this is automatic from
\eqref{eq:dgamma}. Indeed, set $a=\gamma(1,1)$ and apply
\eqref{eq:dgamma} to the triples $(1,1,h)$ and $(h,1,1)$.
\subsubsection{The 1-morphisms}
Let $(\varrho, f_1,\gamma_1)$ and $(\sigma,f_2,\gamma_2)$ be homomorphisms
from $\uH$ to $\uG$. Then the 1-morphisms between them are
transformations from $(\varrho, f_1,\gamma_1)$ to
$(\sigma,f_2,\gamma_2)$. We will follow the conventions in \cite{Gordon:Power:Street}.
A transformation then amounts to the data of an element $s \in G$, together with a 1-cochain $\longmap\eta HA$ satisfying
\begin{equation}
  d_\sigma\eta (h_1,h_2) \,\,:=\,\,
  \frac{\eta(h_2)^{\sigma(h_1)}\cdot\eta(h_1)}{\eta(h_1h_2)}\,\,=\,\,
  \frac{\gamma_1(h_1,h_2)^s}{\gamma_2(h_1,h_2)}
  \cdot
  \frac{\alpha(\sigma(h_1),\sigma(h_2),s)\cdot\alpha(s,\varrho(h_1),\varrho(h_2))}
  {\alpha(\sigma(h_1),s,\varrho(h_2))}.  
  \label{eq:deta}
\end{equation}
for all $h_1, h_2 \in H$.  For $h \in H$, we draw $\eta(h) \bullet \sigma(h) \bullet s$ as
\begin{center}
  \includegraphics{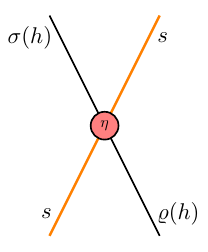}
\end{center}
and so the eight-term equation \eqref{eq:deta} is drawn in string diagram notation as
\begin{center}
  \includegraphics{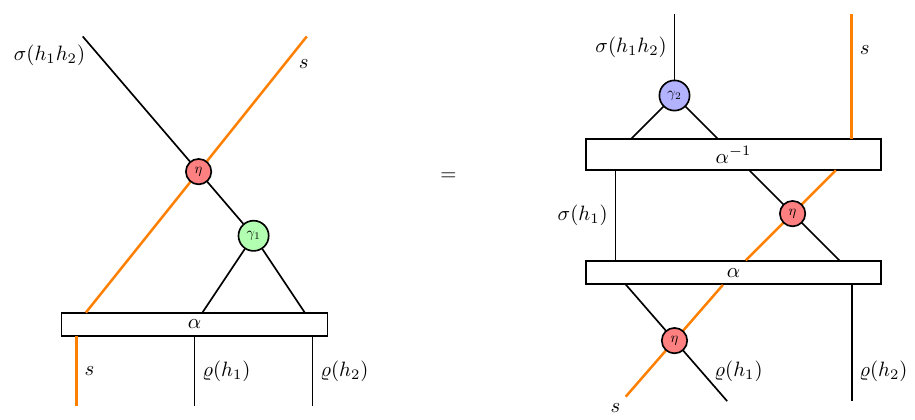}\\
\end{center}

The second condition spells out to
\begin{eqnarray*}
  \eta(1) & = & \frac{\gamma_1(1,1)^s}{\gamma_2(1,1)}
  \end{eqnarray*}
which we do not postulate, since in our situation it is automatic from
\eqref{eq:deta}. 
Indeed, it is obtained from the formula for $d\eta(1,1)$, because
$\alpha$ is normalised.

\subsubsection{The 2-morphisms}
A modification from $(s,\eta)$ to $(t,\zeta)$ requires $s=t$ and
amounts to a 0-cochain
$\omega$ (i.e. an element $\omega \in A$)
satisfying
\begin{eqnarray}\label{eq:domega}
  \frac{\omega^{\sigma(h)}}{\omega} & = & \frac{\zeta(h)}{\eta(h)} 
\end{eqnarray}
for all $h \in H$. We draw $\omega \bullet s$ as
\begin{center}
\includegraphics{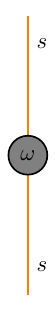}
\end{center}
and so condition \eqref{eq:domega} is drawn in string diagram notation as
\smallskip
\begin{center}
  \includegraphics{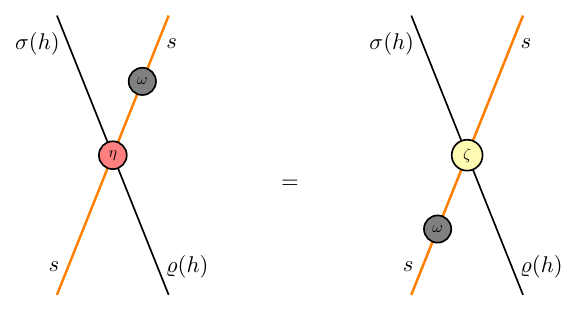}
\end{center}
\begin{example}[Group extensions]
Let $G$ be a group, and let $A$ be an abelian group. Then 
the bicategory of bifunctors from 
$\bullet/\!\!/G$ to $\bullet/\!\!/\bullet/\!\!/ A$
has as objects 2-cocycles on $G$ with values in $A$, viewed as a trivial
$G$-module. A 1-morphism from
$\gamma_1$ to $\gamma_2$ is a 1-cochain $\eta$ with $d\eta
=\gamma_2/\gamma_1$. All the 2-morphisms are 2-automorphisms, and each
2-automorphism group is isomorphic to $A$.  If we truncate at the level of 2-automorphisms,
then this is the category of central extensions of $G$ by $A$ and their isomorphisms over $G$.
\end{example}
\subsection{Inertia (2)groupoids}
\begin{definition}
  We define the inertia 2-groupoid of a categorical group $\uG$
  as the 2-groupoid
  \begin{eqnarray*}
    \Lambda\uG & = & \mathrm{Bicat} (\bullet/\!\!/\ZZ,\bullet/\!\!/\uG),
  \end{eqnarray*}
  where the integers are viewed as a discrete 2-group (only identity morphisms).  
\end{definition}
\begin{example}
  If $\uG=G$ is a finite group, viewed as a categorical group with only
  identity morphisms, then $\Lambda G$ is the usual inertia groupoid
  with objects $g\in G$ and arrows $g\to sgs\inv$.
\end{example}
In general, let $\uG$ be a special skeletal 2-group with objects
$\ob(\uG)=G$.
Then the canonical 2-group homomorphism
\begin{eqnarray*}
  p\negmedspace : \uG & \longrightarrow & G
\end{eqnarray*}
induces a morphism of 2-groupoids
\begin{eqnarray*}
  \Lambda p \negmedspace :\Lambda \uG & \longrightarrow &\Lambda G
\end{eqnarray*}
\begin{lemma}
  The map $\Lambda(p)$ is surjective on objects and full.
\end{lemma}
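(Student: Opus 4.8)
The plan is to unwind the definitions of the inertia $2$-groupoids as hom-bicategories $\mathrm{Bicat}(\bullet/\!\!/\ZZ,\bullet/\!\!/\uG)$ and $\mathrm{Bicat}(\bullet/\!\!/\ZZ,\bullet/\!\!/ G)$, and check the two assertions separately. Since $\ZZ$ is a free group on one generator (viewed as a discrete $2$-group), a homomorphism $\bullet/\!\!/\ZZ \to \bullet/\!\!/\uG$ is determined by where the generator goes; concretely, using the description of objects in the previous subsection with $H=\ZZ$, $\beta$ trivial, $B=\{1\}$, and $\varrho\colon\ZZ\to G$ amounting to a choice of element $g=\varrho(1)\in G$, an object of $\Lambda\uG$ is a pair consisting of $g\in G$ together with a $2$-cochain $\gamma\colon\ZZ\times\ZZ\to A$ satisfying $d\gamma=\varrho^*\alpha$ (the $f_*\beta$ term drops out), and the map $\Lambda p$ sends this to $g\in G=\ob(\Lambda G)$.

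First I would prove surjectivity on objects. Given $g\in G=\ob(\Lambda G)$, I must exhibit some $\gamma$ with $d\gamma=\varrho^*\alpha$, where $\varrho\colon\ZZ\to G$ is the homomorphism $n\mapsto g^n$. The point is that $\varrho^*\alpha$ is a $3$-cocycle on $\ZZ$, and $H^3(\ZZ;\C^\times)=0$ since $\ZZ$ has cohomological dimension $1$; hence $\varrho^*\alpha$ is a coboundary, so such a $\gamma$ exists. (One can even write $\gamma$ down explicitly by the standard contracting-homotopy formula for the bar resolution of $\ZZ$.) This shows every object of $\Lambda G$ is hit, giving surjectivity on objects.

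Next I would prove fullness, i.e. that $\Lambda p$ is surjective on $1$-morphisms between any two objects lying over a given pair. So fix objects $(g,\gamma_1)$ and $(g',\gamma_2)$ of $\Lambda\uG$, and a $1$-morphism in $\Lambda G$ from $g$ to $g'$, which (by the earlier description of transformations, with all $\gamma_i$ and $\alpha$ trivial downstairs) is just an element $s\in G$ with $sgs^{-1}=g'$. I must lift it to a transformation $(s,\eta)$ between $(g,\gamma_1)$ and $(g',\gamma_2)$, i.e. find a $1$-cochain $\eta\colon\ZZ\to A$ satisfying equation \eqref{eq:deta}. The right-hand side of \eqref{eq:deta} is a specified $2$-cochain on $\ZZ$; I must check it is a $2$-cocycle (so that it is a coboundary, again because $H^2(\ZZ;\C^\times)=0$), and then $\eta$ exists. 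That the right-hand side is a cocycle should follow formally: its coboundary is a ratio of $3$-cocycle expressions that vanishes precisely because $\gamma_1,\gamma_2$ satisfy the hexagon \eqref{eq:dgamma} and $\alpha$ is a $3$-cocycle — this is the same bookkeeping identity that makes the composite of transformations well-defined in $\mathrm{Bicat}(\bullet/\!\!/\ZZ,\bullet/\!\!/\uG)$. Alternatively, and more cleanly, one observes that the fibre of $\Lambda p$ over the arrow $s\colon g\to g'$ is either empty or a torsor under $H^1(\ZZ;A)$-type data, and nonemptiness is exactly the obstruction living in $H^2(\ZZ;\C^\times)=0$.

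The main obstacle is the cocycle check in the fullness step: verifying that the prescribed right-hand side of \eqref{eq:deta} is genuinely a $2$-cocycle on $\ZZ$ requires carefully matching up the $\alpha$-terms against $d\gamma_1$ and $d\gamma_2$ and invoking the $3$-cocycle identity for $\alpha$. Once that is in hand, both parts reduce to the vanishing of $H^n(\ZZ;\C^\times)$ for $n\ge 2$, which is immediate. I would streamline the write-up by noting once and for all that $\ZZ$ has cohomological dimension $1$, so all the relevant obstruction and difference groups vanish, and the only real content is checking that the relevant cochains are cocycles — and that is forced by the coherence conditions \eqref{eq:dgamma} already imposed on the objects.
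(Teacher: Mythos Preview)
Your approach is essentially the same as the paper's: identify objects of $\Lambda\uG$ with pairs $(g,\gamma)$ satisfying $d\gamma=\varrho^*\alpha$, invoke $H^3(\ZZ;A)=0$ for surjectivity on objects, then for fullness show the right-hand side of \eqref{eq:deta} is a $2$-cocycle on $\ZZ$ and invoke $H^2(\ZZ;A)=0$.

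Two small remarks. First, the paper states and proves the lemma for a general abelian group $A$ with possibly nontrivial $\ZZ$-action (induced via $\varrho$), not just for $\C^\times$ with trivial action; your argument goes through unchanged since $\ZZ$ has cohomological dimension $1$ regardless of the coefficient module, but you should say $A$ rather than $\C^\times$. Second, the one place where your write-up is genuinely less complete than the paper's is the cocycle check in the fullness step: you say it ``should follow formally'' from \eqref{eq:dgamma} and the $3$-cocycle identity for $\alpha$. The paper makes this precise by listing exactly the four instances of $d\alpha=0$ that are needed, namely $d\alpha(s,g^l,g^m,g^n)$, $d\alpha(f^l,s,g^m,g^n)$, $d\alpha(f^l,f^m,s,g^n)$, and $d\alpha(f^l,f^m,f^n,s)$, where $f=sgs^{-1}$; combining these with $d\gamma_1=\varrho^*\alpha$ and $d\gamma_2=\sigma^*\alpha$ kills all the terms. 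Your high-level reasons for why this must work are sound, but in a final version you should either carry out this bookkeeping or cite the four quadruples as the paper does.
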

\begin{proof}
  The proof relies on the knowledge of the group cohomology of the
  integers, see for instance \cite[Exa. 3.1]{Brown:Lectures_on_the_cohomology}.
  The objects of $\Lambda\uG$ are identified with pairs $(g,\gamma)$,
  where $g$ is an element of $G$ (namely $g=\varrho(1)$) and 
  $$\longmap\gamma{\ZZ\times\ZZ} A$$ is a 2-cochain with boundary
  \begin{eqnarray*}
    d_g\gamma(l,m,n)\, :=\,
    \frac{\gamma(l+m,n)\cdot\gamma(l,m)}{\gamma(l,m+n)\cdot\gamma(m,n)^{g^l}} &
    = & \alpha(g^l,g^m,g^n).
  \end{eqnarray*}
  The map $\Lambda p$ sends $(g,\gamma)$ to $g$. 
  Since
  \begin{eqnarray*}
    H^3(\ZZ,A) & = & 0     
  \end{eqnarray*}
  for any
  $\ZZ$-action on $A$, we may conclude that $\Lambda p$ is surjective
  on objects. 
  Let now $(g,\gamma)$ and $(f,\phi)$ be two objects of
  $\Lambda\uG$, and assume that we are given an arrow from $g$ to $f$
  in $\Lambda G$. Such an arrow amounts to an element $s$ of $G$ with 
  \begin{eqnarray*}
    sgs\inv & = & f.
  \end{eqnarray*}
  Applying the cocycle condition for $\alpha$ four times, namely
  \begin{eqnarray*}
    d\alpha(s,g^l,g^m,g^n) &=&0,\\
    d\alpha(f^l,s,g^m,g^n)&=&0,\\
    d\alpha(f^l,f^m,s,g^n) &=&0,\\
    d\alpha(f^l,f^m,f^n,s)&=&0,
  \end{eqnarray*}
  we obtain that the 2-cochain 
  \begin{eqnarray*}
    (m,n) & \longmapsto &     
    \frac{\phi(m,n)}{\gamma(m,n)^s}\cdot
    \frac{\alpha(f^m,s,g^n)}{\alpha(f^m,f^n,s)\cdot\alpha(s,g^m,g^n)} 
  \end{eqnarray*}
  is a 2-cocycle for the $\ZZ$-action on $A$ induced by $f$.
  Since
  \begin{eqnarray*}
    H^2(\ZZ,A) & = & 0,
  \end{eqnarray*}
  we may conclude that $\Lambda p$ is surjective
  on 1-morphisms. 
\end{proof}
Let $\mathbf G$ be a groupoid, and let $A$ be an abelian group. We
recall from \cite[p.17]{Willerton:Twisted_Drinfeld_double} how an $A$-valued
2-cocycle $\theta$ on $\mathbf G$ defines a central extension
$\widetilde{\mathbf G}$ of $\mathbf G$. The objects of
$\widetilde{\mathbf G}$ are the same as those of $\mathbf G$. The
arrows are
\begin{eqnarray*}
  \Hom_{\widetilde{\mathbf G}}(g,h) & = & A \times   \Hom_{{\mathbf G}}(g,h)
\end{eqnarray*}
with composition 
\begin{eqnarray*}
  (a_1,g_1) (a_2,g_2) & := & (\theta(g_1,g_2)a_1a_2, g_1g_2).
\end{eqnarray*}
Let $\uG$ be the 2-group defined by $\alpha$ as above, and assume
that the $G$-action on $A$ is trivial.  Then all the 2-morphisms in
$\Lambda\uG$ are 2-automorphisms. In this case, we may view
$\Lambda\uG$ as a groupoid, forgetting the 2-morphisms.  Let us denote
by $\twotrunc \Lambda\uG$ the groupoid obtained by truncating $\Lambda\uG$ to forget 2-arrows.
\begin{proposition}\label{prop:transgression_cext}
  The groupoid $\twotrunc \Lambda\uG$ 
  is equivalent to the central extension of $\uG$ defined by 
  the transgression of $\alpha$,
  \begin{eqnarray*}
    {\tau(\alpha)\left(g\xrightarrow{\,\,s\,\,}h\xrightarrow{\,\,t\,\,}k\right)} &
    = &  \frac{\alpha(t,s,g)\cdot \alpha(k,t,s)}{\alpha(t,h,s)}.
  \end{eqnarray*}
\end{proposition}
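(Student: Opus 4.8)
The plan is to build an explicit equivalence of groupoids $\Psi\colon\widetilde{\Lambda G}\to\Lambda\uG$, where $\widetilde{\Lambda G}$ is the central extension of $\Lambda G$ by $A$ recalled above, with $2$-cocycle $\tau(\alpha)$. By the preceding lemma an object of $\Lambda\uG$ is a pair $(g,\gamma)$ with $g\in G$ and $d_g\gamma(l,m,n)=\alpha(g^l,g^m,g^n)$, a $1$-morphism $(g,\gamma_1)\to(h,\gamma_2)$ is a pair $(s,\eta)$ with $sgs\inv=h$ and $\eta$ solving \eqref{eq:deta}, and, since $G$ acts trivially on $A$, \eqref{eq:domega} shows there are no nonidentity $2$-morphisms between distinct $1$-morphisms, each $1$-morphism having $2$-automorphism group $A$; so forgetting the $2$-morphisms indeed leaves a groupoid.

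First I would record the cohomology of $\ZZ$: its cohomological dimension is $1$, so $H^n(\ZZ,A)=0$ for $n\ge 2$; and triviality of the action gives $B^1(\ZZ,A)=0$, whence $H^1(\ZZ,A)=Z^1(\ZZ,A)\cong A$ via $a\mapsto\iota(a)$, $\iota(a)(n):=a^n$. Two consequences: for each $g$ there is a $2$-cochain $\gamma_g$ with $d_g\gamma_g(l,m,n)=\alpha(g^l,g^m,g^n)$, unique up to coboundary; and for each arrow $g\xrightarrow{\,s\,}h$ of $\Lambda G$ the solutions $\eta$ of \eqref{eq:deta} with source $(g,\gamma_g)$ and target $(h,\gamma_h)$ form a nonempty torsor under $\iota(A)$. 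Choosing such an $\eta_{s,g}$ for each arrow, normalised by $\eta_{s,g}(1)=1$ and $\eta_{1,g}\equiv 1$, defines the candidate $\Psi$ by $\Psi(g)=(g,\gamma_g)$ and $\Psi(a,\,g\xrightarrow{\,s\,}h)=(s,\,\iota(a)\cdot\eta_{s,g})$.

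The crux is the functoriality of $\Psi$, i.e. that the obstruction $2$-cocycle of the section $s\mapsto(s,\eta_{s,g})$ is exactly $\tau(\alpha)$. For a composable pair $g\xrightarrow{\,s\,}h\xrightarrow{\,t\,}k$ the composite $(t,\eta_{t,h})\circ(s,\eta_{s,g})$ in the bicategory $\mathrm{Bicat}(\bullet/\!\!/\ZZ,\bullet/\!\!/\uG)$ is automatically a $1$-morphism; it has group component $ts$, and its $1$-cochain component is read off from the canonical formula for the composite of pseudonatural transformations, which whiskers $\eta_{t,h}$ on the right by $s$ and $\eta_{s,g}$ on the left by $t$ and inserts the associators of $\uG$ along $k^n(ts)\to(k^nt)s\to(th^n)s\to t(h^ns)\to t(sg^n)\to(ts)g^n$. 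By Example \ref{ex:ske2grp} those associators are the scalars $\alpha(k^n,t,s)$, $\alpha(t,h^n,s)$, $\alpha(t,s,g^n)$, and, the action being trivial, the whiskerings leave the $A$-values unchanged; evaluating at $n=1$ and using $\eta_{s,g}(1)=\eta_{t,h}(1)=\eta_{ts,g}(1)=1$ collapses the component to $\frac{\alpha(t,s,g)\cdot\alpha(k,t,s)}{\alpha(t,h,s)}=\tau(\alpha)(g\xrightarrow{\,s\,}h\xrightarrow{\,t\,}k)$, so that $(t,\eta_{t,h})\circ(s,\eta_{s,g})=(ts,\,\iota(\tau(\alpha)(g\xrightarrow{\,s\,}h\xrightarrow{\,t\,}k))\cdot\eta_{ts,g})$ — the composition law of $\widetilde{\Lambda G}$. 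I expect this to be the main obstacle: keeping the five associator factors, their inverses and their orientations correct, much as $d\alpha=1$ was invoked four times in the proof of the preceding lemma.

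It remains to see $\Psi$ is an equivalence. For essential surjectivity, given $(g,\gamma)$ the ratio $\gamma_g/\gamma$ is a $2$-cocycle on $\ZZ$, hence a coboundary $d\eta$ as $H^2(\ZZ,A)=0$; since $\alpha$ is normalised, \eqref{eq:deta} with $s=1$ makes $(1,\eta)$ an isomorphism $(g,\gamma_g)\xrightarrow{\,\cong\,}(g,\gamma)$. For full faithfulness, on $\Hom$-sets $\Psi$ is precisely the identification of $A\times\Hom_{\Lambda G}(g,h)$ with the $\iota(A)$-torsor of solutions of \eqref{eq:deta}, hence bijective. Therefore $\Psi$ is an equivalence $\widetilde{\Lambda G}\xrightarrow{\,\simeq\,}\Lambda\uG$, as claimed. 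One could alternatively argue homotopy-theoretically, using that $\Lambda\uG=\mathrm{Bicat}(\bullet/\!\!/\ZZ,\bullet/\!\!/\uG)$ is the free loop $2$-groupoid of $\bullet/\!\!/\uG$ and that the associated central extension over $\Lambda G$ is classified by the transgression of the $k$-invariant $\alpha$; but the cocycle computation above is in the spirit of the rest of the paper and additionally pins down $\tau(\alpha)$ on the nose.
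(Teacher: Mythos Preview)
Your proposal is correct and follows essentially the same approach as the paper's proof. Both arguments fix, for each $g\in G$, an object $(g,\gamma_g)$ over $g$ (using $H^3(\ZZ,A)=0$), identify the arrows over $g\xrightarrow{\,s\,}h$ with $A$ via $\eta\mapsto\eta(1)$ (using $H^2(\ZZ,A)=0$ and $Z^1(\ZZ,A)\cong A$), and then compute that the composition of $(s,\eta)$ with $(t,\zeta)$ picks up exactly the three associator factors $\alpha(k,t,s)$, $\alpha(t,h,s)^{-1}$, $\alpha(t,s,g)$ coming from the pasting of pseudonatural transformations in $\bullet/\!\!/\uG$. The only cosmetic difference is packaging: the paper passes to the full subgroupoid $\Lambda\uG'$ on the chosen objects and reads off the composition from a string diagram, whereas you build the functor $\Psi\colon\widetilde{\Lambda G}\to\Lambda\uG$ explicitly and then verify essential surjectivity and full faithfulness; the underlying computation is identical.
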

\begin{proof}
For each $g\in G$, fix an object $(g,\gamma)$ of $\twotrunc \Lambda\uG$ mapping
to $g$ under $\Lambda p$. Since $\Lambda p$ is surjective on arrows,
the full subgroupoid $\twotrunc \Lambda\uG'$ of $\twotrunc \Lambda\uG$ with the objects we
just fixed is equivalent to $\twotrunc \Lambda\uG$.
Since $G$ (and hence $\ZZ$) acts trivially on $A$, the $A$-valued
one-cocycles on $\ZZ$ are just group homomorphisms from $\ZZ$ to
$A$. Hence, for any 2-cocycle $\xi\in Z^2(\ZZ,A)$, we have a bijection
\begin{eqnarray*}
  \{\eta \mid d\eta=\xi\} & \longrightarrow & A\\
   \eta & \longmapsto & \eta(1).
\end{eqnarray*}
Let now $s$ be an element of $G$, and let
\begin{eqnarray*}
  h=sgs\inv.
\end{eqnarray*}
Inserting the right-hand side of \eqref{eq:deta} for $\xi$, allows us
identify the set of arrows in $\twotrunc \Lambda\uG'$ mapping to $s$ with
$A$. Let now $t$ be another element of $G$ and let $k=tht\inv$. The
following string diagram illustrates the composition of arrows
$(s,\eta)$ and $(t,\zeta)$ in $\twotrunc \Lambda\uG'$.  
  \begin{center}
  \includegraphics{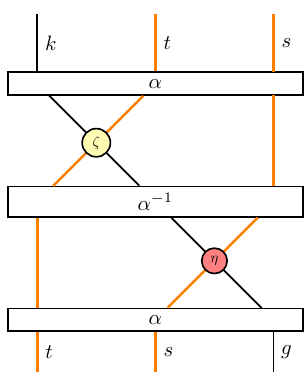}
  \end{center}
\end{proof}

\section{Projective 2-representations}\label{sec:projective_2-representations}
The following is a $\C$-linear version of \cite[Definition 2.8]{Frenkel:Gerbal_representations}.
\begin{definition}
Let $G$ be a finite group, and $\uC$ a strict $\C$-linear 2-category.  A
\emph{projective 2-representation} of $G$ on $\uC$ consists of the
following data 
\begin{enumerate}[(a)]

\item an object $V$ of $\uC$

\item for each $g \in G$, a 1-automorphism $\varrho(g) : V \longrightarrow V$, drawn as

\begin{center}
\includegraphics{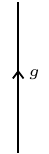}
\end{center}

\item for every pair $g, h \in G$, a 2-isomorphism $\psi_{g,h} : \varrho(g)\varrho(h) \stackrel{\cong}{\Rightarrow} \varrho(gh)$, drawn as

\begin{center}
\includegraphics{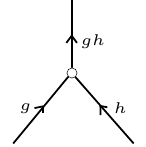}
\end{center}

\item a 2-isomorphism $\psi_1 : \varrho(1) \stackrel{\cong}{\Rightarrow} \id_{V}$, drawn as

\begin{center}
\includegraphics{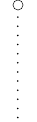}
\end{center}
\end{enumerate}

such that the following conditions hold

\begin{enumerate}[(i)]
\item for any $g, h, k \in G$, we have
\[
\psi_{g,hk}(\varrho(g) \psi_{h,k}) = \alpha(g,h,k)  \psi_{gh,k} (\psi_{g,h} \varrho(k)),
\]
where $\alpha(g,h,k) \in \C^\times$.  In string diagram notation, we draw this as
\begin{center}
\includegraphics{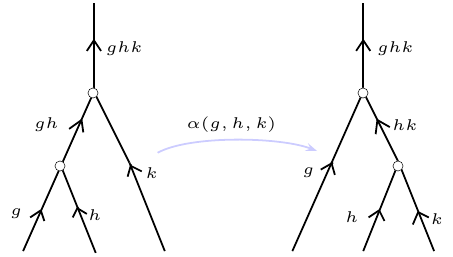}
\end{center}
\item for any $g \in G$, we have
\[
\psi_{1,g} = \psi_1 \varrho(g)  \mbox{ and }\psi_{g,1} = \varrho(g) \psi_1.
\]
In string diagram notation, we draw these as
\begin{center}{\ \\}
\begin{minipage}[c]{0.3\textwidth}
\includegraphics{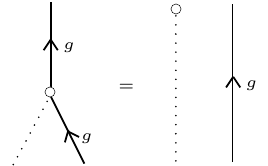}
\end{minipage}
\begin{minipage}[c]{0.08\textwidth}
 and 
\end{minipage}
\begin{minipage}[c]{0.3\textwidth}
\includegraphics{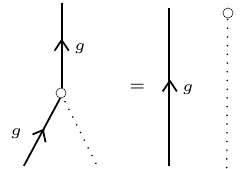}
\end{minipage}
\end{center}
\end{enumerate}
\label{def:proj2rep}
\end{definition}

\subsection{The 3-cocycle condition}

\begin{proposition}[{Compare \cite[Theorem 2.10]{Frenkel:Gerbal_representations}}]
Let $\varrho$ be a projective 2-representation of a group $G$.  Then
the map $\alpha : G \times G \times G \longrightarrow \C^\times$
appearing in condition (i) is a normalised 3-cocycle for the trivial
$G$-action on $\C^\times$.
\end{proposition}

\begin{proof}
We use Definition \ref{def:proj2rep} (i) for all steps of our proof.  Consider

\begin{center}
\includegraphics{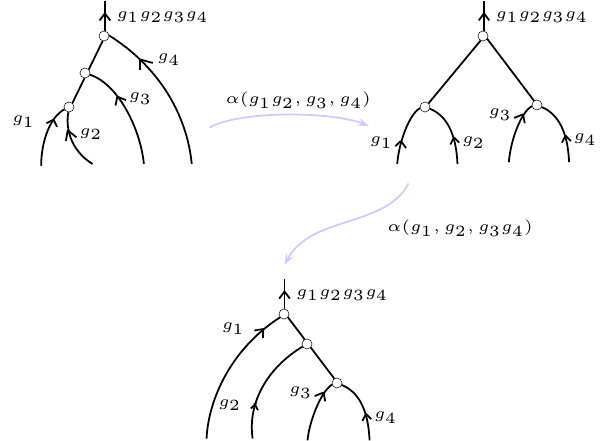}
\end{center}

On the other hand, we have 

\begin{center}
\includegraphics{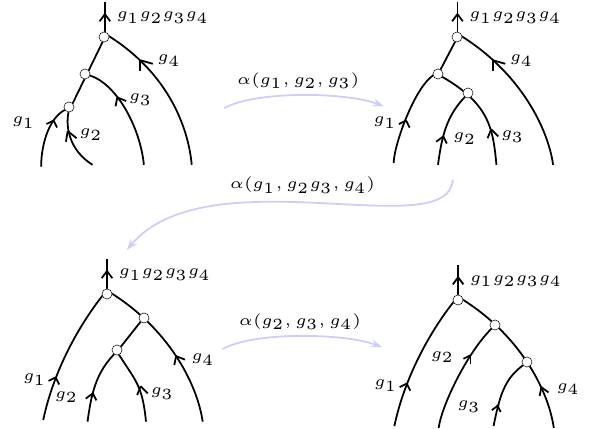}
\end{center}

Comparing these diagrams, we find
\begin{equation*}
\alpha(g_1g_2,g_3,g_4) \cdot\alpha(g_1,g_2,g_3g_4) =
\alpha(g_2,g_3,g_4)\cdot \alpha(g_1,g_2g_3,g_4)\cdot
\alpha(g_1,g_2,g_3), 
\end{equation*}
so $\alpha$ is indeed a 3-cocycle.
\end{proof}
\begin{corollary}
  A projective 2-representation of $G$ with cocycle $\alpha$ is precisely
  a linear representation of the 2-group $\mathcal G$
  classified by $\alpha$ (see Definition \ref{def:2rep}).
\end{corollary}
\begin{proof}
  Indeed, Condition (i) of Definition \ref{def:proj2rep} amounts to the 
hexagon diagram for a strong monoidal functor, and Condition (ii)
translates to the unit diagrams.   
\end{proof}
\begin{example}[{Compare \cite[\S 5.1]{GK:Representation_and_character_theory}}]
Let $G$ be a finite group, and let $\theta$ be a normalised 2-cochain.
Let $\alpha = d\theta$ be the coboundary of $\theta$, i.e., 
\[
\alpha(g,h,k) = \frac{\theta(gh,k)\cdot \theta(g,h)}{\theta(g,hk)\cdot
  \theta(h,k)}.
\]
Let $\mathrm{Vect_\C}$ be the category of finite dimensional
$\C$-vector spaces.
Then we define a projective 2-representation of $G$ on
$\mathrm{Vect}_{\C}$ with corresponding 3-cocycle $\alpha$ as follows :
for $g \in G$, we let 
\[
\varrho(g) = \id : \mathrm{Vect}_{\C} \longrightarrow \mathrm{Vect}_{\C}
\]
be the identity functor on $\mathrm{Vect}_{\C}$.  For $g, h \in G$
we let $$\psi_{g,h} : \id\circ\id \stackrel{\cong}{\Longrightarrow}
\id$$ 
be given by multiplication by $\theta(g,h)$. Further, $$\psi_1 : \varrho(1)
\stackrel{\cong}{\Longrightarrow} \id$$ is the identity natural transformation.
\label{ex:trivp2rep}
\end{example}
We recall some further notation from \cite{Bartlett:Unitary_2_representations}.
\smallskip

\begin{center}
\includegraphics{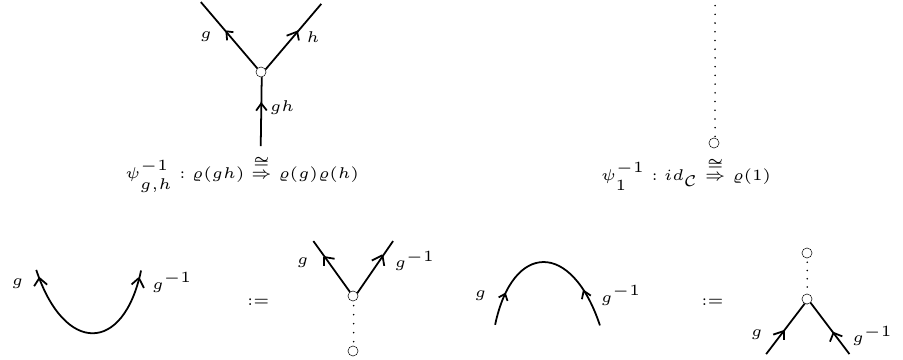}
\end{center}

\smallskip
\noindent

\begin{remark}
For future reference, we present the following tautological string diagram equations, as in \cite[\S 7.1.1]{Bartlett:Unitary_2_representations}.  By inverting condition (i) of Definition \ref{def:proj2rep}, we get
\smallskip

\begin{center}
\includegraphics{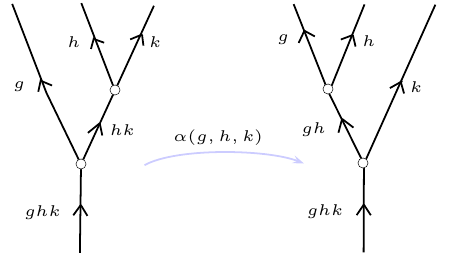}
\end{center}
that is,
\begin{equation}
  (\psi_{g,h}^{-1} \varrho(k)) \psi_{gh,k}^{-1} = \alpha(g,h,k)
  (\varrho(g) \psi_{h,k}^{-1}) \psi_{g,hk}^{-1} 
\label{cor:rstringa}
\end{equation}

We have $\psi_1 \circ \psi_1^{-1} = \id_{\uC}$ and $\psi_1^{-1} \circ \psi_1 = \varrho(1)$,
drawn as 

\begin{center}
\begin{minipage}[c]{0.05\textwidth}
$(a)$
\end{minipage}
\begin{minipage}[c]{0.25\textwidth}
\includegraphics{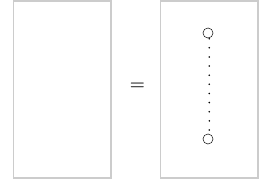}
\end{minipage}
\begin{minipage}[c]{0.1\textwidth}
\ 
\end{minipage}
\begin{minipage}[c]{0.05\textwidth}
$(b)$
\end{minipage}
\begin{minipage}[c]{0.25\textwidth}
\includegraphics{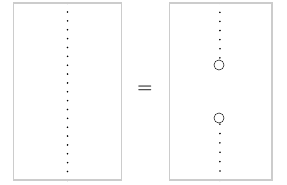}
\end{minipage}
\end{center}

Similarly, $\psi_{g,h}^{-1} \circ \psi_{g,h} = \varrho(g) \varrho(h)$ and $\psi_{g,h} \circ \psi_{g,h}^{-1} = \varrho(gh)$ for all $g, h \in G$, drawn as\\

\begin{center}
\begin{minipage}[c]{0.05\textwidth}
$(c)$
\end{minipage}
\begin{minipage}[c]{0.25\textwidth}
\includegraphics{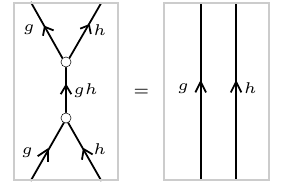}
\end{minipage}
\begin{minipage}[c]{0.1\textwidth}
\ 
\end{minipage}
\begin{minipage}[c]{0.05\textwidth}
$(d)$
\end{minipage}
\begin{minipage}[c]{0.25\textwidth}
\includegraphics{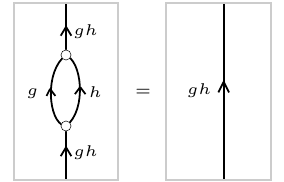}
\end{minipage}
\end{center}

Finally, 
$\psi_{1,g} (\psi_1^{-1} \varrho(g)) =\varrho(g) =  \psi_{g,1} (\varrho(g) \psi_1^{-1})$ for all $g \in G$, drawn as\\ 
\begin{center}
\begin{minipage}[c]{0.08\textwidth}
(e)
\end{minipage}
\begin{minipage}[c]{0.4\textwidth}
\includegraphics{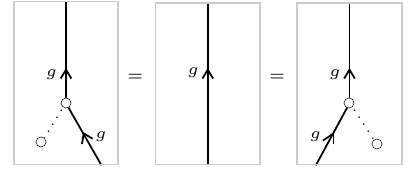}
\end{minipage}
\end{center}
%
\label{cor:gfxm}

\end{remark}

Some less tautological graphical equations for projective 2-representations are given by
the following results.
\begin{lemma}[{\cite[Lemma 7.3 (ii)]{Bartlett:Unitary_2_representations}}]
The following graphical equation holds

\smallskip
\begin{center}
\includegraphics{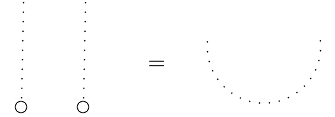}
\end{center}
\end{lemma}
\begin{proof}
\ 
\begin{center}
\includegraphics{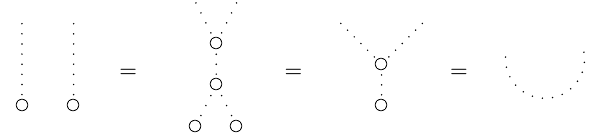}
\end{center}{\ \\}
The first equality follows from \ref{cor:gfxm} (c), the second from \ref{cor:gfxm} (e), with the final following by definition.
\end{proof}

\begin{lemma}[Compare {\cite[Lemma 7.3 (iii)]{Bartlett:Unitary_2_representations}}]
The following graphical equations hold\\

\begin{minipage}[c]{0.18\textwidth}
\hfill $(i)$\quad\quad
\end{minipage}
\begin{minipage}[c]{0.25\textwidth}
\includegraphics{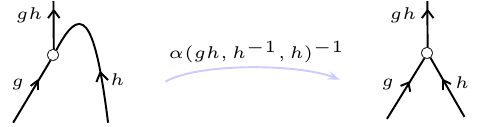}

\end{minipage}
{\ \\}\vspace{1cm}{\ \\}
\begin{minipage}[c]{0.18\textwidth}
\hfill $(ii)$\quad\quad
\end{minipage}
\begin{minipage}[c]{0.25\textwidth}
\includegraphics{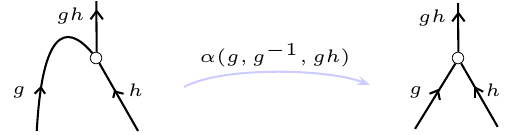}
\end{minipage}
\label{lem:bphd73iii}
\end{lemma}{\ \\}

\begin{proof}
We will prove $(ii)$; the proof of $(i)$ is almost identical.  By
combining \ref{cor:gfxm} (c) and (e), we obtain 
\begin{center}
\includegraphics{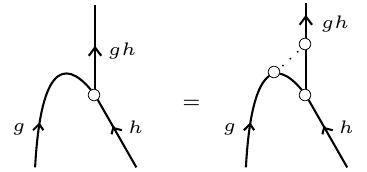}
\end{center}
Next, by \ref{def:proj2rep} (i), we obtain
\begin{center}
\includegraphics{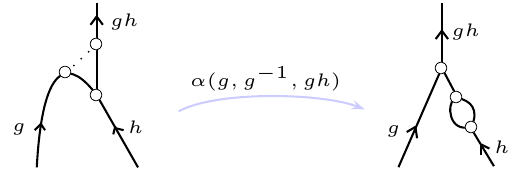}
\end{center}
A final application of \ref{cor:gfxm} (d) gives us the desired result.
\end{proof}

\begin{corollary}[Compare {\cite[Lemma 7.3 (i)]{Bartlett:Unitary_2_representations}}]
The following graphical equations hold\\

\begin{minipage}[c]{0.05\textwidth}
$(i)$
\end{minipage}
\begin{minipage}[c]{0.35\textwidth}
\includegraphics{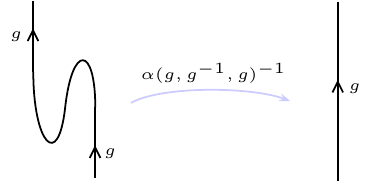}
\end{minipage}
\begin{minipage}[c]{0.1\textwidth}
\ 
\end{minipage}
\begin{minipage}[c]{0.05\textwidth}
$(ii)$
\end{minipage}
\begin{minipage}[c]{0.35\textwidth}
\includegraphics{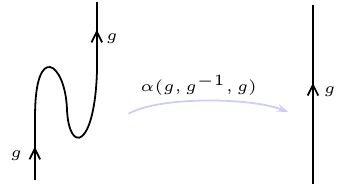}
\end{minipage}
\end{corollary}

\begin{proof}
We will prove $(i)$; the proof of $(ii)$ is almost identical.  By applying \ref{lem:bphd73iii} and then \ref{cor:gfxm} (e), we have

\begin{center}
\includegraphics{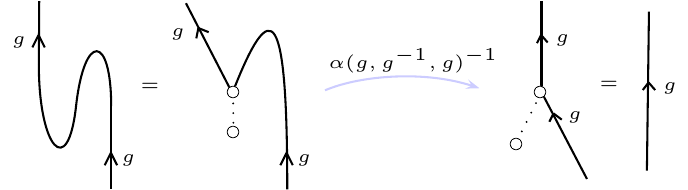}
\end{center}
as required.
\end{proof}

\begin{corollary}[Compare {\cite[Lemma 7.3 (iv)]{Bartlett:Unitary_2_representations}}] 
The following graphical equation holds\\
\begin{center}
\includegraphics{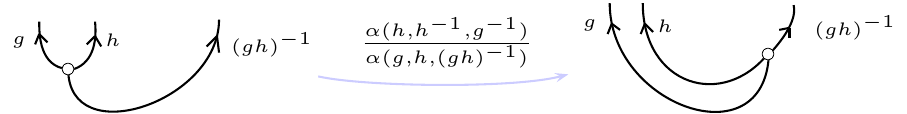}
\end{center}
\label{cor:73iv}
\end{corollary}

\begin{proof}
Applying \ref{cor:rstringa}. we get
\begin{center}
\includegraphics{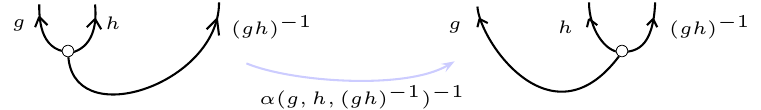}
\end{center}

Inverting the equation derived in part (ii) of \ref{lem:bphd73iii} gives the desired result.
\end{proof}
\subsection{The character of a projective 2-representation}
Recall that the character of a classical representation $\varrho$ is
the map $\chi : G \longrightarrow \C$ defined by $\chi(g) =
\mathrm{tr}(\varrho(g))$. 
This motivates the following definition of \cite{GK:Representation_and_character_theory} and \cite{Bartlett:Unitary_2_representations}.
\begin{definition}[{\cite[Definition 3.1]{GK:Representation_and_character_theory} and \cite[Definition 7.8]{Bartlett:Unitary_2_representations}}]
Let $\uC$ be a 2-category, $x \in \ob(\uC)$ and $A \in \OneHom_{\uC}(x,x)$ a 1-endomorphism of $x$.  The \emph{categorical trace} of $A$ is defined
to be
\[
\Tr(A) = \TwoHom_{\uC}(1_x,A)
\]
where $1_x$ is the identity 1-morphism of $x$.
\end{definition}

\begin{remark}
If $\uC$ is a $\C$-linear 2-category, then the categorical
trace of a 1-endomorphism ${A : x \longrightarrow x}$ is a $\C$-vector space.
\end{remark}

\begin{definition}[{Compare \cite[Definition 4.8]{GK:Representation_and_character_theory} and, in particular, \cite[Definition 7.9]{Bartlett:Unitary_2_representations}}]
Let $\varrho$ be a projective 2-representation of a finite group $G$.  The \emph{character} of $\varrho$ is the assignment
\[
g \longmapsto \Tr(\varrho(g)) =: X_\varrho(g)
\]
for each $g \in G$, and the collection of isomorphisms 
\[
\beta_{g,h} : X_\varrho(g) \longrightarrow X_\varrho(hgh^{-1})
\]
defined in terms of string diagrams

\begin{center}
\includegraphics{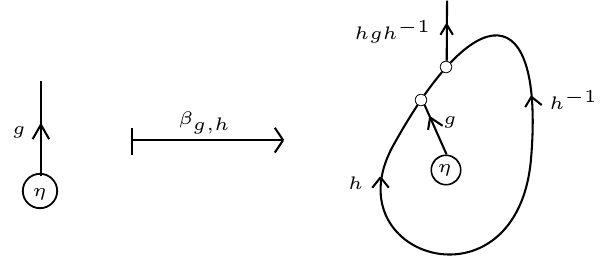}
\end{center}
for each $g, h \in G$.  That the $\beta_{g,h}$ are isomorphisms is a
consequence of Theorem \ref{thm:char;rep;twdd}.  
\label{def:character}
\end{definition}
We note that the
definitions in \cite{GK:Representation_and_character_theory} and \cite{Bartlett:Unitary_2_representations} are the special case
$\alpha = 1$, although they look a bit different at first sight. There
are several thinkable generalisations of those definitions. This definition was chosen
based on the discussion in Section \ref{sec:inertia}.
\begin{definition}[{\cite[Definition 4.12]{GK:Representation_and_character_theory}}]
  Let $\varrho$ be a projective 2-representation of a finite group $G$
  on a $\C$-linear 2-category. If $g, h \in G$ is a pair of commuting
  elements, then $\beta_{g,h}$ is an automorphism of $X_\varrho(g)$.  
  Assuming $\beta_{g,h}$ to be of trace class, we have the \emph{joint
    trace} of $g$ and $h$,
  \[
    \chi_\varrho(g,h) := tr(\beta_{g,h}).
  \]
  If the joint trace is defined for all commuting $g, h \in G$, we
  refer to $\chi_\varrho$ as the {\em 2-character} of $\varrho$.    
\end{definition}
\begin{example}
As in \cite[\S 5.1]{GK:Representation_and_character_theory}, let us consider the categorical character and 2-character
of the projective 2-representation defined in Example
\ref{ex:trivp2rep}.  For $g \in G$, we have 
\[
X_\varrho(g) = \Tr(\id_{\mathrm{Vec}_\C}) = \C
\]
Let $g, h \in G$ be commuting elements. Then it follows from
Definition \ref{def:character} that the joint trace $\chi_\varrho(g,h)$ is
given by multiplication by the transgression of $\theta$
\[
  \frac{\theta(h,g)}{\theta(hgh^{-1},h)} = \frac{\theta(h,g)}{\theta(g,h)}.  
\]
\end{example}

We now present our main result.  

\begin{theorem} \label{thm:char;rep;twdd} 
  Let $\uG$ be a finite categorical group, let $V$ be an object of a
 $\C$-linear strict 2-category and let $$\longmap\varrho{\uG}{\mathrm{GL}(V)}$$
  be a linear representation of $\uG$ on $V$. Then the categorical
  character of $\varrho$ is a representation of the inertia groupoid
 $\twotrunc \Lambda\uG$ of $\uG$.
\end{theorem}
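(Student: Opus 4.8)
The plan is to unwind the definitions and check that the two transgression formulas produced by the earlier results coincide, so that the statement becomes a tautology once the dictionary between categorical characters and representations of $\Lambda\uG$ is set up. First I would recall that a linear representation $\varrho\negmedspace:\uG\to GL(V)$ of a finite categorical group is, by the Corollary following the 3-cocycle Proposition, the same data as a projective 2-representation of the underlying group $G$ with 3-cocycle $\alpha$ classifying $\uG$. So we are free to work with the string-diagram data $(\varrho(g),\psi_{g,h},\psi_1)$ and the character $(X(g),\beta_{g,h})$ of Definition \ref{def:character}.

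Next I would spell out what a representation of the groupoid $\Lambda\uG$ is. By the Proposition just proved, $\Lambda\uG$ (viewed as a $1$-groupoid, since $G$ acts trivially on $A=\C^\times$) is equivalent to the central extension of $\Lambda G$ by $\C^\times$ determined by the $2$-cocycle $\tau(\alpha)$ given there, namely $\tau(\alpha)\bigl(g\xrightarrow{s}h\xrightarrow{t}k\bigr)=\alpha(t,s,g)\cdot\alpha(k,t,s)/\alpha(t,h,s)$. A representation of this central extension (on which the central $\C^\times$ acts by scalars) is exactly: a vector space $W_g$ for each object $g$, and for each arrow $g\xrightarrow{s}h$ an isomorphism $W_g\to W_h$, composing up to the scalar $\tau(\alpha)$. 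Assigning $W_g:=X(g)$ and to $g\xrightarrow{h}hgh^{-1}$ the map $\beta_{g,h}$, the required compatibility is precisely the commutativity-up-to-scalar of the triangle in Theorem \ref{thm:char;rep;twdd}, once we match indices: the arrow composition $r\xrightarrow{g}grg^{-1}\xrightarrow{h}hgrg^{-1}h^{-1}$ has $\tau(\alpha)$-value $\alpha(h,g,r)\cdot\alpha(hgrg^{-1}h^{-1},h,g)/\alpha(h,grg^{-1},g)$, which is exactly the scalar \eqref{eq:transgression_alpha}. One also checks the unit arrow $r\xrightarrow{1}r$ acts as the identity on $X(r)$, which follows from normalisation of $\alpha$ together with the unit conditions in Definition \ref{def:proj2rep}(ii) (equivalently Corollary \ref{cor:gfxm}(e)); this is a short string-diagram computation showing $\beta_{r,1}=\id_{X(r)}$.

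The only genuinely substantive step is bookkeeping: verifying that the index substitution in Theorem \ref{thm:char;rep;twdd}'s scalar really is the evaluation of $\tau(\alpha)$ on the composite arrow in $\Lambda G$, and that the equivalence $\Lambda\uG\simeq\C^{\tau(\alpha)}[\Lambda G]$ is compatible with the scalar-action condition built into Definition \ref{def:2rep}. I expect the main obstacle to be cosmetic rather than mathematical — namely making sure the chosen representatives $(g,\gamma)$ of objects of $\Lambda\uG$ and the identification of arrows with elements of $A$ (via $\eta\mapsto\eta(1)$, as in the proof of the Proposition) are the same normalisation conventions under which the categorical character was computed, so that no stray cocycle factor appears. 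Once the dictionary is pinned down, the theorem is an immediate restatement of Theorem \ref{thm:char;rep;twdd}, and I would phrase the proof as: ``Combine the Corollary identifying linear representations of $\uG$ with projective 2-representations, Definition \ref{def:character}, and the Proposition identifying $\Lambda\uG$ with $\C^{\tau(\alpha)}[\Lambda G]$; under these identifications the coherence datum of a $\Lambda\uG$-representation is exactly the content of Theorem \ref{thm:char;rep;twdd}, with the unit axiom supplied by Corollary \ref{cor:gfxm}(e).''
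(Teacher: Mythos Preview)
Your proposal is correct and matches the paper's approach exactly: the paper presents this theorem explicitly as a reformulation of Theorem~\ref{thm:char;rep;twdd} in light of the preceding Proposition identifying $\Lambda\uG$ with the central extension of $\Lambda G$ by $\tau(\alpha)$, and offers no separate proof. Your write-up is in fact more detailed than what the paper provides, since you spell out the index-matching between the scalar \eqref{eq:transgression_alpha} and the value of $\tau(\alpha)$ on the composite arrow, and you note the unit check $\beta_{r,1}=\id_{X(r)}$.
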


\begin{proof}
Recall that $\uG$ is determined by a finite group $G$ together with a 3-cocycle $\alpha$ on $G$
with values in $\C^\times$.  By Proposition \ref{prop:transgression_cext}, we must show that the diagram

\begin{center}
\begin{tikzcd}
X_\varrho(r) \arrow{rr}{\beta_{r,hg}} \arrow{rd}{\beta_{r,g}} & & X_\varrho(hgrg^{-1}h^{-1})\\
& X_\varrho(grg^{-1}) \arrow{ru}{\beta_{grg^{-1},h}}
\end{tikzcd}
\end{center}
commutes up to a scalar, i.e.
\begin{equation}
\label{eq:transgression_alpha}
\frac{\alpha(h,grg^{-1},g)}{\alpha(hgrg^{-1}h^{-1},h,g)\cdot \alpha(h,g,r)}\cdot
\beta_{grg^{-1},h} \circ \beta_{r,g} = \beta_{r,hg}
\end{equation}
for all $r, g, h \in G$.  Fix elements $r, g,h \in G$ and $\eta \in X(r)$. By applying \ref{cor:gfxm} (d) twice, we find 
\begin{center}
\includegraphics{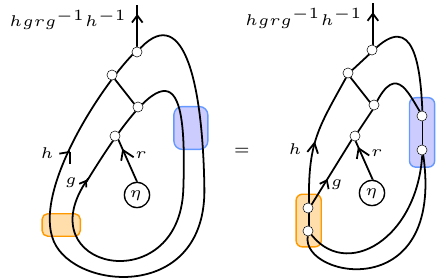}
\end{center}

Applying \ref{cor:73iv} twice, we have

\begin{center}
\includegraphics{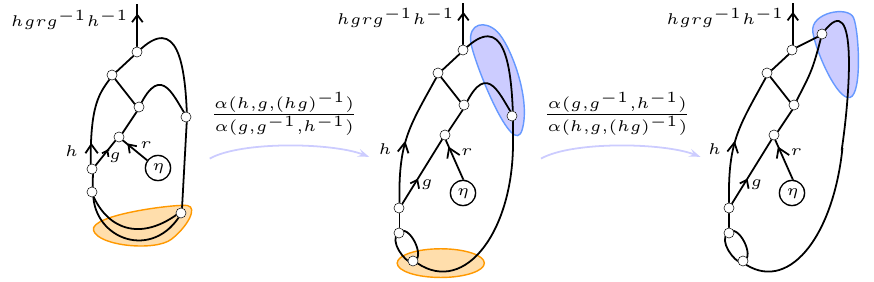}
\end{center}

These two factors cancel, so the first and last diagram in this figure
are equal.  We redraw this diagram by removing the loop (as per
\ref{cor:gfxm} (d)), then apply \ref{cor:gfxm} (c) to get 
\begin{center}
\includegraphics{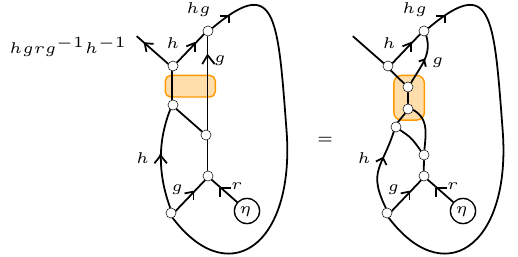}
\end{center}

Next, we apply \ref{def:proj2rep} (i) to obtain

\begin{center}
\includegraphics{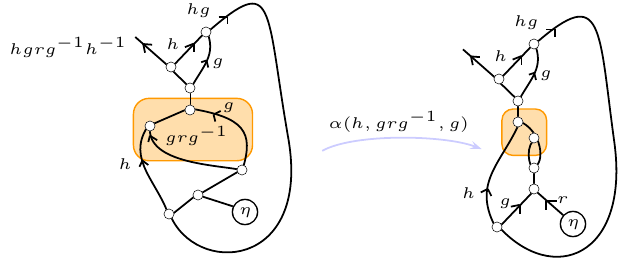}
\end{center}

By removing the loop and applying \ref{cor:rstringa}, we get
\begin{center}
\includegraphics{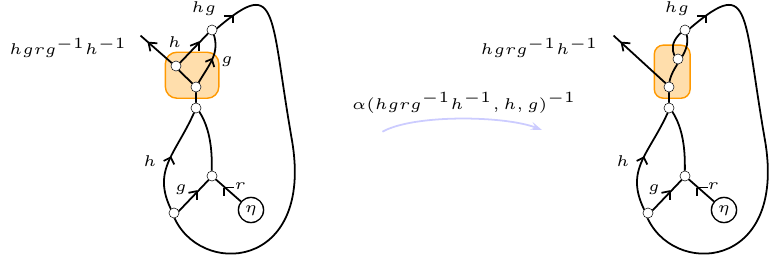}
\end{center}

Finally, we remove this loop then apply \ref{def:proj2rep} (i) to compute
\begin{center}
\includegraphics{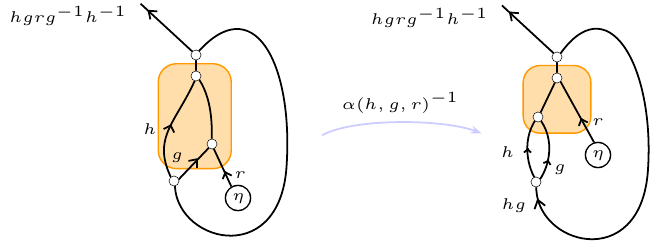}
\end{center}

After removing the loop we recognise this final diagram as representing
$\beta_{r,hg}(\eta)$.  We have therefore shown that 
\[
\frac{\alpha(h,grg^{-1},g)}{\alpha(hgrg^{-1}h^{-1},h,g)\cdot \alpha(h,g,r)}\cdot
\beta_{grg^{-1},h} \circ \beta_{r,g}(\eta) = \beta_{r,hg}(\eta), 
\]
as required.
\end{proof}

The main result of \cite{Willerton:Twisted_Drinfeld_double} identifies the twisted Drinfeld module
of $G$ for $\alpha$ with the twisted groupoid algebra
  \[
    D^\alpha(G) \cong \C^{\tau(\alpha)}[\Lambda G].
  \]
	and so we get the following corollary.
	
	\begin{corollary}[{compare \cite[Theorem 5.8]{KF:DDT}}]
	Let $G$ be a finite group, $\alpha$ a 3-cocycle on $G$ with values in $\C^\times$, and $\uG$ the corresponding
	categorical group.  Then representations of the inertia groupoid $\twotrunc \Lambda \uG$ are modules over the twisted Drinfeld double $D^\alpha(G)$.
	\label{cor:tigtwdd}
	\end{corollary}	

\section{Module categories and induction}
\subsection{Projective 2-representations as module
  categories}\label{sec:2-representations_as_module_categories} 
Let $\C$ be a field, and let 
\[
  \theta : G \times G \longrightarrow \C^{\times}
\] 
be a 2-cocycle.  Then there is an equivalence of
categories 
\begin{equation}
\mathrm{Rep}_\C^{\theta}(G) \simeq \C^{\theta}[G]\mathrm{-mod}
\label{eq:1drepequiv}
\end{equation}

from the projective $G$-representations with cocycle $\theta$ to
modules over the twisted group algebra $\C^{\theta}[G]$.  In the
context of 2-representations, $\C$ is replaced by the one-dimensional
2-vector space $\vctk$.  The \emph{categorified twisted group algebra}
$\vctk^{\alpha}[G]$ 
is the monoidal category of $G$-graded finite dimensional $\C$-vector
spaces, where the monoidal structure consists of the graded tensor
product, with 
associators twisted by $\alpha$ (see \cite{Ostrik:Module_categories_Drinfeld}, where
$\vctk^{\alpha}[G]$ is denoted $\vgw$). 

\begin{definition}
Let $\uC$ be a strict $\C$-linear 2-category, and let $\uG$ be the
skeletal 2-group classified by the (normalised) 3-cocycle $\alpha : G
\times G \times G \longrightarrow \C^{\times}$, as in Example
\ref{ex:ske2grp}.  
We write 
\begin{equation}
\mathrm{2Rep}_{\uC}^{\alpha}(G) := \mathrm{Bicat}( \bullet /\!\!/ \uG, \uC)
\end{equation}
for the 2-category of 2-representations as in \cite[Definition
7.1]{Bartlett:Unitary_2_representations}.\footnote{This is not the same as the category
  $\Hom_{\mathrm{2}\mbox{-}\mathrm{Grp}}(\uG, \mathrm{Aut}(\uC))$ in
  \cite{Frenkel:Gerbal_representations} after Definition 2.6.} 
\end{definition}

In the case where $\uC$ is the 2-category of finite dimensional
Kapranov-Voevodsky 2-vector spaces\footnote{A 2-vector space is a
  semisimple $\vctk$-module 
category with finitely many simple objects, see
\cite{Kapranov:Voevodsky}}, we will use the notation 
$\mathrm{2Rep}_{\vctk}^{\alpha}(G)$.  The 2-categorical analogue of
Equation \ref{eq:1drepequiv} is then 
\[
\mathrm{2Rep}_{\vctk}^{\alpha}(G) \simeq \vctk^\alpha[G]\mathrm{-mod}
\] 

We will switch freely between the points of view of module categories
and projective 2-representations. 

\begin{example}
Let $\theta$ be a 2-cochain on $G$ with boundary $d\theta = \alpha$.  Then
\[
\C^{\theta}[G] = \bigoplus_{g \in G} \C
\]
with multiplication twisted by $\theta$ is an algebra object in
$\vctk^{\alpha}[G]$.  Note that is it \emph{not} an algebra.  The
$\vctk^\alpha[G]$-module category 
\[
\vctk^\alpha[G]-\C^{\theta}[G]
\]
of right $\C^{\theta}[G]$-modules in $\vctk^\alpha[G]$ is the basic
example of a module category in \cite[\S 3.1]{Ostrik:Module_categories_Hopf}.  It translates
into our Example \ref{ex:trivp2rep} via 
the equivalence
\begin{eqnarray*}
F : \vctk^\alpha[G]-\C^{\theta}[G] &\longrightarrow& \vctk\\
\bigoplus_{g \in G} M_g &\longmapsto& M_1.
\end{eqnarray*}

Indeed, if we equip $\vctk$ with the module structure of Example
\ref{ex:trivp2rep}, then $F$ can be made a module functor as follows: 
given a $\C^{\theta}[G]$-module object $M$ in $\vctk^{\alpha}$ with
action 
\[
  M \otimes \C^{\theta}[G]
  \overset{s}{\underset{\cong}\longrightarrow} M,
\] \
we choose the isomorphism 
\[
M_{g^{-1}} = F(\C_g \otimes M) \longrightarrow \C_g \cdot F(M) = M_1
\]
to be the map
\[
M_{g^{-1}} \otimes \C_g \xhookrightarrow{\phantom{ABI}} F(M \otimes
\C^{\theta}[G]) \xrightarrow{F(s)} F(M). 
\]
\label{ex:babyequiv}
\end{example}

\subsection{Induced 2-representations}
Let $H \subset G$ be finite groups, and let $\alpha$ be a normalised
3-cocycle on $G$.  Let $\varrho$ be a projective 2-representation of
$H$ on $W \in \ob(\uC)$ 
with cocycle $\alpha|_{H}$.  

\begin{definition}
The induced 2-representation of $W$, if it exists, is characterised by
the universal property of a left-adjoint.  More precisely, an object
$\ind_H^G W$, together with 
a projective 2-representation $\ind_H^G \varrho$ with cocycle $\alpha$
and a 1-morphism 
\[
j : \varrho \longrightarrow \ind_H^G \varrho
\]
in $\mathrm{2Rep}_{\vctk}^{\alpha|_H}(H)$ is called {\em induced by
$\varrho$,} if for any projective $G$-2-representation $\pi$ on $V \in
\ob(\uC)$ with cocycle $\alpha$ and any 1-morphism of 
projective $H$-2-representations (for $\alpha|_H$)
\[
F : \varrho \longrightarrow \pi
\]
there exists a 1-morphism of projective $G$-2-representations (for $\alpha$)
\[
\bar{F} : \ind_H^G \varrho \longrightarrow \pi
\]
and a 2-isomorphism $\Phi$ fitting in the commuting diagram of $H$-maps

\begin{center}
\includegraphics{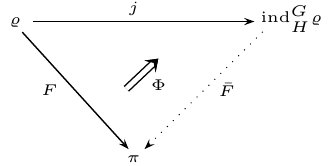}
\label{dia:hmaps}
\end{center}

such that $(\bar{F}, \Phi)$ is determined uniquely up to unique
2-isomorphism.  Here ``unique'' means that given two such pairs $(\bar{F_1},
\Phi_1)$ and $(\bar{F_2}, \Phi_2)$, there is a unique 2-isomorphism 
\[
\eta : \bar{F_1} \stackrel{\cong}{\Longrightarrow} \bar{F_2}
\]
satisfying
\[
(\eta j) \circ \Phi_1 = \Phi_2.
\]

If it exists, the induced projective 2-representation of $W$ is
determined uniquely up to a 1-equivalence in
$\mathrm{2Rep}^\alpha_{\vctk}(G)$, which is unique up to canonical
2-isomorphism.\end{definition} 

In the following, we abbreviate $\alpha|_H$ with $\alpha$.

\begin{proposition}
Let $\uM$ be a $\C$-linear left $\vctk^{\alpha}[H]$-module category.
Then the induced projective 2-representation of $\uM$ exists, 
and is given by the tensor product of $\vctk^{\alpha}[H]$-module categories
\[
\ind_H^G \uM = \vctk^{\alpha}[G] \boxtimes_{\vctk^{\alpha}[H]} \uM
\]
defined in \cite[Definition 3.3]{Etingof:Fusion_categories_and_homotopy_theory}.
\end{proposition}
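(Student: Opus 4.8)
The plan is to verify the universal property of Definition above directly, using the theory of module categories over fusion categories (here $\vctk^\alpha[G]$, which is a fusion category since $\alpha$ is normalised). First I would recall that under the equivalence $\mathrm{2Rep}^{\alpha}_{\vctk}(G) \simeq \vctk^\alpha[G]\text{-mod}$ established in Section \ref{sec:2-representations_as_module_categories}, a projective $2$-representation of $G$ on a $2$-vector space is the same datum as a left $\vctk^\alpha[G]$-module category, and a $1$-morphism of $2$-representations corresponds to a module functor. Similarly, restriction along the inclusion $\vctk^{\alpha}[H]\hookrightarrow \vctk^{\alpha}[G]$ of monoidal categories (which makes sense because $\alpha|_H$ is the restriction of $\alpha$) realises the forgetful/restriction functor $\mathrm{2Rep}^{\alpha}_{\vctk}(G)\to\mathrm{2Rep}^{\alpha|_H}_{\vctk}(H)$, so the claim is precisely that $\vctk^\alpha[G]\boxtimes_{\vctk^\alpha[H]}(-)$ is left adjoint to this restriction.

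Second, I would invoke the properties of the relative tensor product $\boxtimes_{\vctk^\alpha[H]}$ of module categories from \cite[Definition 3.3]{eno;fchomotopy} (or the dual/bimodule version in \cite{eno;fchomotopy}): for a monoidal functor $\uR = \vctk^\alpha[H]\to\uS = \vctk^\alpha[G]$, the category $\uS$ is an $(\uS,\uR)$-bimodule category, so $\uS\boxtimes_{\uR}(-)$ sends left $\uR$-module categories to left $\uS$-module categories, and there is a natural equivalence
\[
\operatorname{Fun}_{\uS}\!\left(\uS\boxtimes_{\uR}\uM,\, \uV\right)\;\simeq\;\operatorname{Fun}_{\uR}\!\left(\uM,\, \operatorname{Res}\uV\right)
\]
for any left $\uS$-module category $\uV$, which is exactly the $2$-categorical adjunction underlying the universal property. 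The unit $1$-morphism $j:\uM\to\operatorname{Res}(\uS\boxtimes_{\uR}\uM)$ is the one corresponding to the identity under this equivalence, i.e. $m\mapsto \mathbf 1_{\uS}\boxtimes m$; it is a morphism of $\uR$-module categories by construction. Given any $F:\uM\to\operatorname{Res}\uV$ one sets $\bar F$ to be the mate of $F$ under the adjunction and $\Phi$ the unit/counit $2$-isomorphism, and the stated uniqueness of $(\bar F,\Phi)$ up to unique $2$-isomorphism is exactly the statement that the adjunction equivalence is an equivalence of categories (fully faithful and essentially surjective on the relevant $\operatorname{Fun}$ categories), read off one hom-category at a time.

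Third, I would check the remaining bookkeeping: that the module-category structure on $\uS\boxtimes_{\uR}\uM$ transported through $\mathrm{2Rep}^{\alpha}_{\vctk}(G)\simeq \vctk^\alpha[G]\text{-mod}$ really does have underlying cocycle $\alpha$ (immediate, since the $\uS$-action is by the $\alpha$-twisted associators), and that the diagram of $H$-maps in the Definition commutes on the nose after unravelling $\bar F$, which is built into the triangle identities for the adjunction. One should also note $\uS\boxtimes_{\uR}\uM$ is again a $2$-vector space — finite semisimple with finitely many simples — because $\uS$ and $\uM$ are and $\boxtimes_{\uR}$ of such module categories over a fusion category is again of this type by \cite{eno;fchomotopy}, so it indeed lies in $\mathrm{2Rep}^{\alpha}_{\vctk}(G)$.

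The main obstacle is the second step: pinning down the precise form of the relative tensor product $\boxtimes_{\uR}$ for a monoidal \emph{functor} (as opposed to a $\uR$-bimodule) and confirming that the hom-$2$-category adjunction $\operatorname{Fun}_{\uS}(\uS\boxtimes_{\uR}\uM,\uV)\simeq\operatorname{Fun}_{\uR}(\uM,\operatorname{Res}\uV)$ holds at the level of $2$-categories, including the uniqueness-up-to-unique-$2$-isomorphism clause — this is a $2$-categorical adjunction statement rather than just a $1$-categorical one, and while it is essentially standard (an ``induction-restriction'' adjunction for module categories), spelling it out compatibly with the string-diagram/universal-property formulation in the Definition requires care. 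Everything else is formal transport of structure along the equivalence $\mathrm{2Rep}^{\alpha}_{\vctk}(G)\simeq\vctk^\alpha[G]\text{-mod}$.
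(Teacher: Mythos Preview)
Your proposal is correct and follows essentially the same route as the paper: both arguments derive the universal property of $\ind_H^G\uM$ from the universal property of the relative tensor product $\boxtimes_{\vctk^\alpha[H]}$ in \cite{eno;fchomotopy}, first to endow $\vctk^\alpha[G]\boxtimes_{\vctk^\alpha[H]}\uM$ with its left $\vctk^\alpha[G]$-module structure and then to obtain the induction--restriction adjunction. The paper's proof is terser (two sentences), while you spell out the adjunction equivalence $\operatorname{Fun}_{\uS}(\uS\boxtimes_{\uR}\uM,\uV)\simeq\operatorname{Fun}_{\uR}(\uM,\operatorname{Res}\uV)$ and the unit map explicitly, but the content is the same.
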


\begin{proof}
Using the universal property of $- \boxtimes_{\vctk^\alpha[H]} -$, one
equips $\vctk^{\alpha}[G] \boxtimes_{\vctk^\alpha[H]} \uM$ with the
structure 
of a left $\vctk^{\alpha}[G]$-module category.  Using the universal
property of $- \boxtimes_{\vctk^\alpha[H]} -$ again, we deduce the
universal property for $\ind_H^G \uM$. 
\end{proof}

\begin{proposition}
Let $A$ be an algebra object in $\vctk^\alpha[H]$, and let $\uM =
\vctk^\alpha[H]\mathrm{-}A$ be the category of right $A$-module
objects in $\vctk^\alpha[H]$.  Then we have 
\[
\ind_H^G \uM = \vctk^\alpha[G]\mathrm{-}A,
\]
and the map $j$ is the canonical inclusion
\[
j : \vctk^\alpha[H]\mathrm{-}A \longrightarrow \vctk^\alpha[G]\mathrm{-}A
\]
i.e. $j(M)|_{H} = M$ and $j(M)|_{g} = 0$ for $g \notin H$.
\label{prop:indalg}
\end{proposition}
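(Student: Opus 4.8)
The plan is to combine the previous proposition with the known behaviour of the relative tensor product of module categories. By the previous proposition, $\ind_H^G\uM = \vctk^\alpha[G]\boxtimes_{\vctk^\alpha[H]}\uM$, so it suffices to identify this relative tensor product with the category $\vctk^\alpha[G]\text{-}A$ of right $A$-module objects in $\vctk^\alpha[G]$. First I would record the standard fact that for an algebra object $A$ in a monoidal category $\uW$ (here $\uW = \vctk^\alpha[H]$) and a right $\uW$-module category $\uN$ (here $\uN = \vctk^\alpha[G]$, regarded as a $\vctk^\alpha[H]$-module category by restriction of grading), the relative tensor product $\uN\boxtimes_{\uW}(\uW\text{-}A)$ is equivalent to the category $\uN\text{-}A$ of right $A$-module objects internal to $\uN$. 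This is the module-category analogue of the ring-theoretic identity $N\otimes_R(R/I)\cong N/NI$, or more precisely of $N\otimes_R(\text{right }A\text{-modules in }R) \simeq \text{right }A\text{-modules in }N$; I would cite it from the definition of $\boxtimes$ in \cite{eno;fchomotopy} together with \cite{o;mc}, where this kind of statement appears for module categories over fusion categories.

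Next I would make the identification of the ambient categories explicit. The monoidal category $\vctk^\alpha[G]$ is a right module category over $\vctk^\alpha[H]$ via the inclusion $H\subset G$: a $G$-graded vector space is in particular acted on by $H$-graded vector spaces by tensoring in the $\vgw$-sense (the associativity constraint is controlled by $\alpha|_{H}$ on the relevant triples, which is exactly the $\alpha$ we have fixed on $H$). Under this structure, $A$ (an algebra object in $\vctk^\alpha[H]$) becomes an algebra object in $\vctk^\alpha[G]$ supported in the $H$-graded part, and a right $A$-module object internal to $\vctk^\alpha[G]$ is precisely a $G$-graded object $M = \bigoplus_{g\in G}M_g$ together with an action map $M\otimes A\to M$ satisfying the usual axioms; since $A$ is $H$-supported, such an object decomposes along cosets of $H$, and restricting to the trivial coset recovers an object of $\vctk^\alpha[H]\text{-}A = \uM$.

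With the equivalence $\vctk^\alpha[G]\boxtimes_{\vctk^\alpha[H]}\uM \simeq \vctk^\alpha[G]\text{-}A$ in hand, it remains to check that the 1-morphism $j$ produced abstractly as the adjunction unit (equivalently, the canonical functor $\uM \to \vctk^\alpha[G]\boxtimes_{\vctk^\alpha[H]}\uM$) corresponds, under this equivalence, to the inclusion sending an $A$-module object $M$ internal to $\vctk^\alpha[H]$ to the $A$-module object internal to $\vctk^\alpha[G]$ which equals $M$ in $H$-degrees and $0$ elsewhere. This is a matter of unwinding the universal property of $\boxtimes$: the canonical functor $\uM\to \vctk^\alpha[G]\boxtimes_{\vctk^\alpha[H]}\uM$ is $N\mapsto \mathbf{1}_{\vctk^\alpha[G]}\boxtimes N$, and $\mathbf{1}_{\vctk^\alpha[G]} = \C_e$ is concentrated in degree $e\in H$, so the image is supported on the trivial coset, giving exactly the stated formula $j(M)|_H = M$, $j(M)|_g = 0$ for $g\notin H$. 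Finally I would note that $j$ is automatically a map of projective $H$-2-representations because, under the equivalence $\mathrm{2Rep}^\alpha_{\vctk}(H)\simeq \vctk^\alpha[H]\text{-mod}$, module-category morphisms are precisely 1-morphisms of 2-representations.

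The main obstacle I expect is not any single computation but rather bookkeeping the coherence data: verifying that the associativity and unit constraints of $\vctk^\alpha[G]$ restrict correctly to make $\vctk^\alpha[G]$ a $\vctk^\alpha[H]$-module category \emph{compatibly} with the $\alpha|_H$ already chosen, and then tracking the $\alpha$-twists through the relative tensor product so that the equivalence $\vctk^\alpha[G]\boxtimes_{\vctk^\alpha[H]}(\vctk^\alpha[H]\text{-}A)\simeq \vctk^\alpha[G]\text{-}A$ respects the module-category structure (not merely the underlying categories). Once the cocycle $\alpha$ is handled coherently, the coset decomposition argument is routine. I would therefore organise the proof so that the coherence check is isolated into one lemma-like paragraph, after which the identification of objects, morphisms, and the functor $j$ follows directly.
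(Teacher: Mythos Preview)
Your proposal is correct, but it follows a different route from the paper. You reduce to the previous proposition, writing $\ind_H^G\uM$ as the relative tensor product $\vctk^\alpha[G]\boxtimes_{\vctk^\alpha[H]}\uM$, and then invoke the general base-change equivalence $\uN\boxtimes_{\uW}(\uW\text{-}A)\simeq\uN\text{-}A$. The paper instead verifies the universal property of $\ind_H^G$ directly on $\vctk^\alpha[G]\text{-}A$: it decomposes any $A$-module object $M$ in $\vctk^\alpha[G]$ as $\bigoplus_{G/H}M|_{rH}$, fixes coset representatives $\uR$, and for a given $H$-equivariant functor $F\colon\uM\to\uN$ explicitly builds the $G$-equivariant extension
\[
\bar F(M)=\bigoplus_{r\in\uR}\C_r\cdot F\bigl(\C_{r^{-1}}\cdot M|_{rH}\bigr),
\]
then checks that $\Phi$ is the inclusion of the trivial-coset summand and that the pair $(\bar F,\Phi)$ is unique up to unique 2-isomorphism. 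Your argument is slicker if one is willing to import the module-category base-change lemma, and it makes clear why the answer should be $\vctk^\alpha[G]\text{-}A$; the paper's argument is more self-contained, makes the coherence with $\alpha$ visible in the explicit formula rather than deferring it to a cited lemma, and yields as a byproduct the coset decomposition $\vctk^\alpha[G]\cong\bigoplus_{r\in\uR}\vctk^\alpha[H]$ of right $\vctk^\alpha[H]$-modules, which the paper records as a corollary of proof.
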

\begin{proof}
Let $M = \bigoplus_{g \in G} M_g$ be a right $A$-module object in
$\vctk^\alpha[G]$.  Then $M$ is the direct sum \emph{of $A$-module
  objects} 
\[
M = \bigoplus_{G/H} M|_{rH}
\]
where
\[
(M|_{rH})_s = \begin{cases}\begin{matrix} M_s & s \in rH\\ 0 &
    \mbox{otherwise}\end{matrix}\end{cases} 
\]
Fix a system $\uR$ of left coset representatives, and assume we are
given a $\vctk^{\alpha}[G]$-module category $\uN$ together with a
$\vctk^{\alpha}[H]$-module functor  
\[
F : \vctk^\alpha[H]\mathrm{-}A \longrightarrow \uN.
\]
We define the $\vctk^\alpha[G]$-module functor
\begin{align*}
\bar{F}\ :\ &\vctk^\alpha[G]\mathrm{-}A \longrightarrow \uN\\
&M \longmapsto \bigoplus_{r \in \uR} \C_r \cdot F(\C_{r^{-1}} \cdot M|_{rH}) 
\end{align*}
Then $\Phi$ is the inclusion of the summand $F(M)$ in $\bar{F}(j(M))$.
This $\Phi$ is an isomorphism, because the other summands of $\bar{F}
j(M)$ are (canonically) zero. 

Let $(\bar{F_2}, \Phi_2)$ be a second pair fitting into the diagram on
page \pageref{dia:hmaps}, for instance, from a different choice of
coset representatives.  Then the isomorphism 
$\eta : \bar{F} \Longrightarrow \bar{F_2}$ is the inverse of the composition
\[
\bar{F_2}(M) \cong \bigoplus_{r \in \uR} \bar{F_2}(M|_{rH}) \cong
\bigoplus_{r \in \uR} \C_r \cdot \bar{F_2}(\C_{r^{-1}} \otimes
M|_{rH}) \stackrel{\Phi_2}{\cong} \bar{F}(M)  
\]
\end{proof}

\begin{pcorollary*}
As right $\vctk^\alpha[H]$-modules
\[
\vctk^\alpha[G] \cong \bigoplus_{\stackrel{r \in \uR}{G/H}} \vctk^\alpha[H]
\]
\end{pcorollary*}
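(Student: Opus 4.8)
The plan is to obtain this decomposition as a corollary of the proof of Proposition \ref{prop:indalg}, by running the argument with the algebra object $A = \vctk^\alpha[H]$ itself (regarded as a right module over itself). First I would recall that $\vctk^\alpha[H]\mathrm{-}\vctk^\alpha[H]$ is equivalent, as a $\vctk^\alpha[H]$-module category, to $\vctk^\alpha[H]$ acting on itself by left multiplication; this is just the graded analogue of the statement that a ring is free of rank one as a module over itself. Applying Proposition \ref{prop:indalg} with this choice of $A$ then identifies $\ind_H^G(\vctk^\alpha[H]\mathrm{-}\vctk^\alpha[H])$ with $\vctk^\alpha[G]\mathrm{-}\vctk^\alpha[H]$, which in the same way is equivalent to $\vctk^\alpha[G]$ viewed as a right $\vctk^\alpha[H]$-module category via the inclusion of the subalgebra.

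Next I would unwind the explicit formula for $\bar F$ from the proof of Proposition \ref{prop:indalg}. Taking $\uN = \vctk^\alpha[H]\mathrm{-}\vctk^\alpha[H] \simeq \vctk^\alpha[H]$ and $F$ the identity module functor, the formula
\[
\bar F(M) \;=\; \bigoplus_{r \in \uR} \C_r \cdot F\bigl(\C_{r^{-1}} \cdot M|_{rH}\bigr)
\]
exhibits every object $M$ of $\vctk^\alpha[G]\mathrm{-}\vctk^\alpha[H]$, i.e.\ every object of $\vctk^\alpha[G]$, as a direct sum indexed by the coset representatives $r \in \uR$ of the summands $\C_r \cdot (\C_{r^{-1}} \cdot M|_{rH})$, each of which is (canonically) an object induced up from $\vctk^\alpha[H]$. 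Reading this off for $M = \vctk^\alpha[G]$ itself gives the claimed decomposition
\[
\vctk^\alpha[G] \;\cong\; \bigoplus_{r \in \uR} \C_r \cdot \vctk^\alpha[H] \;\cong\; \bigoplus_{\stackrel{r \in \uR}{G/H}} \vctk^\alpha[H]
\]
as right $\vctk^\alpha[H]$-module categories, where the second isomorphism uses that multiplication by $\C_r$ is an equivalence of right $\vctk^\alpha[H]$-module categories $\vctk^\alpha[H] \xrightarrow{\sim} \C_r \cdot \vctk^\alpha[H]$ (with the associator twist $\alpha$ absorbed into the module structure). Concretely the summand indexed by $r$ is the full subcategory of objects supported on the coset $rH$.

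The only point requiring any care — the main (mild) obstacle — is bookkeeping the $\alpha$-twisted associativity constraints: multiplication by $\C_r$ is not literally an isomorphism of module categories on the nose but only up to the coherence isomorphisms built from $\alpha$, so one must check that the direct-sum decomposition $M = \bigoplus_{G/H} M|_{rH}$ from the proof of Proposition \ref{prop:indalg} is compatible with these constraints. This is already implicit in that proof (the decomposition there is stated for $A$-module objects and hence respects the $\vctk^\alpha[G]$-structure), so in practice one simply invokes it. I would therefore keep the corollary's proof to a single sentence: apply Proposition \ref{prop:indalg} to $A = \vctk^\alpha[H]$ and restrict the resulting identification of $\ind_H^G$ along the forgetful functor, or equivalently read off the formula for $\bar F$ above.
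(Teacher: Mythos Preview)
Your proposal lands in the right place but carries a persistent type error that would not survive a careful write-up. In Proposition~\ref{prop:indalg}, $A$ is an \emph{algebra object in} $\vctk^\alpha[H]$, i.e.\ a single $H$-graded vector space with an $\alpha$-associative multiplication. The monoidal category $\vctk^\alpha[H]$ is not such an object, so ``$A=\vctk^\alpha[H]$'' does not type-check; likewise ``$M=\vctk^\alpha[G]$'' later on confuses the category with an object of it. If you meant the regular object $\bigoplus_{h\in H}\C_h$, note that this is an algebra object only when $\alpha|_H$ is a coboundary (as in Example~\ref{ex:babyequiv}), which is not assumed here. The choice that does work is $A=\C_1$, the monoidal unit, for which $\vctk^\alpha[H]\text{-}\C_1=\vctk^\alpha[H]$ and $\vctk^\alpha[G]\text{-}\C_1=\vctk^\alpha[G]$. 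A second slip: in the $\bar F$ formula you set $\uN=\vctk^\alpha[H]$, but $\uN$ must be a $\vctk^\alpha[G]$-module category for $\C_r\cdot(-)$ to make sense; $\vctk^\alpha[H]$ has no such action in general.

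That said, none of this machinery is needed. The paper states this as a ``Corollary of proof'' with no argument because the content is literally the first displayed line of the proof of Proposition~\ref{prop:indalg}: the coset decomposition
\[
M \;=\; \bigoplus_{G/H} M|_{rH},
\]
valid for any $G$-graded vector space $M$, is manifestly compatible with the right $\vctk^\alpha[H]$-action (tensoring on the right by $\C_h$ preserves each coset), and left multiplication by $\C_{r^{-1}}$ gives a right $\vctk^\alpha[H]$-module equivalence from the full subcategory supported on $rH$ to $\vctk^\alpha[H]$. Your final sentence (``the summand indexed by $r$ is the full subcategory of objects supported on the coset $rH$'') is exactly this, and is all that is required; the detour through a specific $A$, the $\bar F$ formula, and the induction universal property is unnecessary and, as set up, not quite well-posed.
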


\subsection{Comparison of classifications}
In \cite[Proposition 7.3]{GK:Representation_and_character_theory}, the finite dimensional
2-re\-pre\-sen\-ta\-tions are classified.  In \cite[Example 2.1]{Ostrik:Module_categories_Drinfeld},
the indecomposable module categories 
over $\vctk^{\alpha}[G]$ are classified.  In \cite{GK:Representation_and_character_theory}, a
comparison with Ostrik's work was attempted, but the dictionary
established in the
previous section appears to be more suitable, as it translates directly between these
two results. Indeed, for trivial $\alpha$, the following corollary
specialises to \cite[Proposition 7.3]{GK:Representation_and_character_theory}.
\begin{corollary}
Let $\varrho$ be a projective 2-representation of a group $G$ with
3-cocycle $\alpha$ on a semisimple 
$\C$-linear abelian category $\uV$ with finitely many simple objects. Then
\[
\uV \cong \bigoplus_{i=1}^m \ind_{H_i}^G \varrho_{\theta_i}
\]
where the $H_i$ are subgroups of $G$, $\theta_i$ is a 2-cochain on
$H_i$ such that $d\theta_i = \alpha|_{H_i}$,  
and $\varrho_{\theta_i}$ is the projective 2-representation
corresponding to the pair $(H_i,\theta_i)$ described 
in Examples \ref{ex:trivp2rep} and \ref{ex:babyequiv}.
\end{corollary}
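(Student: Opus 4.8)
The plan is to deduce the corollary from the classification of module categories over $\vctk^\alpha[G]$ in \cite[Example 2.1]{o;mcddfg}, using the dictionary of Section \ref{sec:2-representations_as_module_categories} together with Proposition \ref{prop:indalg}. First I would translate the hypothesis: a projective 2-representation $\varrho$ of $G$ with cocycle $\alpha$ on a semisimple $\C$-linear abelian category $\uV$ with finitely many simple objects is, by the equivalence $\mathrm{2Rep}_{\vctk}^\alpha(G) \simeq \vctk^\alpha[G]\mathrm{-mod}$, the same as a $\vctk^\alpha[G]$-module category structure on $\uV$. Since such a module category decomposes as a finite direct sum of indecomposable ones, and the construction $\ind_{H}^G$ is additive, it suffices to treat the case where $\uV$ is indecomposable.

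Next I would invoke Ostrik's classification: an indecomposable $\vctk^\alpha[G]$-module category corresponds to a pair $(H, \psi)$ where $H \subseteq G$ is a subgroup and $\psi$ is a 2-cochain on $H$ with $d\psi = \alpha|_H$ (equivalently, $\alpha|_H$ is a coboundary and $\psi$ trivialises it), realised as the category $\vctk^{\alpha}[G]\mathrm{-}\C^\psi[H]$ of right $\C^\psi[H]$-module objects, where $\C^\psi[H]$ is the twisted group algebra object of Example \ref{ex:babyequiv}. By Proposition \ref{prop:indalg} applied with $A = \C^\psi[H]$, this module category is exactly $\ind_H^G\big(\vctk^\alpha[H]\mathrm{-}\C^\psi[H]\big)$, and by the equivalence $F$ of Example \ref{ex:babyequiv} the module category $\vctk^\alpha[H]\mathrm{-}\C^\psi[H] \simeq \vctk$ carries precisely the module structure of Example \ref{ex:trivp2rep}, i.e. it \emph{is} $\varrho_\psi$. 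Assembling these identifications over the finitely many indecomposable summands $(H_i,\theta_i)$ gives $\uV \cong \bigoplus_{i=1}^m \ind_{H_i}^G \varrho_{\theta_i}$, as claimed.

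The step I expect to be the main obstacle is making the chain of equivalences compatible \emph{as module categories / 2-representations}, not merely as categories. Concretely: Ostrik's classification, the equivalence $\vctk^\alpha[H]\mathrm{-}\C^\psi[H] \simeq \vctk$, and the identification of $\ind_H^G$ with $\boxtimes$ each need to be checked to respect the $\vctk^\alpha$-actions coherently, and one must verify that the cochain $\psi$ produced by Ostrik's parametrisation is the same data (up to the expected equivalence) that enters Example \ref{ex:trivp2rep}. This is essentially bookkeeping with associators twisted by $\alpha$, but it is where the proof has content; everything else is a formal consequence of the universal property of induction and additivity. For trivial $\alpha$ one checks the statement collapses to \cite[Proposition 7.3]{rct2c}, which serves as a sanity check on the normalisations.
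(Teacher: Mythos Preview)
Your proposal is correct and follows essentially the same route as the paper: invoke Ostrik's classification to write $\uV$ as a direct sum of categories $\vctk^\alpha[G]\text{-}\C^{\theta_i}[H_i]$, then apply Proposition~\ref{prop:indalg} and Example~\ref{ex:babyequiv} to rewrite each summand as $\ind_{H_i}^G \varrho_{\theta_i}$. The paper's proof is just the three-line chain of equivalences you describe, without the explicit reduction to the indecomposable case or the caveat about coherence of module structures.
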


\begin{proof}
Ostrik's result \cite[Example 2.1]{Ostrik:Module_categories_Drinfeld} yields a decomposition
\begin{align*}
\uV &\simeq \bigoplus_{i=1}^m \vctk^\alpha[G]-\C^{\theta_i}[H_i]\\
	  &\simeq \bigoplus_{i=1}^m \ind_{H_i}^{G} (\vctk^{\alpha}[H_i] - \C^{\theta_i}[H_i])\\
		&\simeq \bigoplus_{i=1}^m \ind_{H_i}^G \varrho_{\theta_i}
\end{align*}
Here, the second equivalence is Proposition \ref{prop:indalg}, the
third is Example \ref{ex:babyequiv}, and $\varrho_{\theta_i}$ is 
as in Example \ref{ex:trivp2rep}.
\end{proof}

\bibliographystyle{alphaurl}
\bibliography{mylib}

\end{document}